\setlist[enumerate]{
  label=(\thethm.\arabic*),
  before={\setcounter{enumi}{\value{equation}}},
  after={\setcounter{equation}{\value{enumi}}},
  itemsep=1ex
}
\setlist[itemize]{
  leftmargin=*,
  topsep=1ex,
  itemsep=1ex,
  label=$\circ$
}
\newtheorem*{thm-plain}{Theorem}
\newtheorem{thm}{Theorem}[section]
\newtheorem{lem}[thm]{Lemma}
\newtheorem{prp}[thm]{Proposition}
\newtheorem{cor}[thm]{Corollary}
\numberwithin{equation}{thm}
\theoremstyle{definition}
\newtheorem{dfn}[thm]{Definition}
\newtheorem*{dfn-plain}{Definition}
\theoremstyle{remark}
\newtheorem*{ntn-plain}{Notation}
\newtheorem{setup}[thm]{Setup}
\newtheorem{rem}[thm]{Remark}
\newtheorem*{rem-plain}{Remark}
\newcommand{\inv}{^{-1}}
\newcommand{\from}{\colon}
\newcommand{\lto}{\longrightarrow}
\newcommand{\x}{\times}
\newcommand{\bij}{\xrightarrow{\,\smash{\raisebox{-.5ex}{\ensuremath{\scriptstyle\sim}}}\,}}
\newcommand{\isom}{\cong}
\newcommand{\defn}{\coloneqq}
\newcommand{\ndef}{\eqqcolon}
\newcommand{\tensor}{\otimes}
\newcommand{\id}{\mathrm{id}}
\newcommand{\pr}{\mathrm{pr}}
\newcommand{\wt}{\widetilde}
\newcommand{\wb}{\overline}
\renewcommand{\d}{\mathrm d}
\newcommand{\delbar}{\overline\partial}
\newcommand{\dual}{^{\smash{\scalebox{.7}[1.4]{\rotatebox{90}{\textup\guilsinglleft}}}}}
\newcommand{\factor}[2]{\left. \raise 2pt\hbox{$#1$} \right/\hskip -2pt \raise -2pt\hbox{$#2$}}
\DeclareMathOperator{\Sym}{Sym}
\DeclareMathOperator{\Aut}{Aut}
\newcommand{\lref}{\labelcref}
\newcommand{\set}[1]{\left\{ #1 \right\}}
\def\rd#1.{\lfloor{#1}\rfloor}
\def\rp#1.{\lceil{#1}\rceil}
\def\tw#1.{\langle{#1}\rangle}
\newcommand{\la}{\langle}
\newcommand{\ra}{\rangle}
\renewcommand{\O}[1]{\mathscr{O}_{#1}}
\newcommand{\Omegap}[2]{\Omega_{#1}^{#2}}
\newcommand{\Omegar}[2]{\Omega_{#1}^{[#2]}}
\newcommand{\T}[1]{\mathcal{T}_{#1}}
\newcommand{\canmod}{\mathrm{can}}
\newcommand{\reg}[1]{{#1}_{\mathrm{reg}}}
\newcommand{\codim}[2]{\mathrm{codim}_{#1}(#2)}
\newcommand{\ccorb}[2]{\mathrm{\tilde c}_{#1} \! \left( #2 \right)}
\newcommand{\cpcorb}[3]{\mathrm{\tilde c}_{#1} \! \left( #3 \right)^{#2}}
\def\Hnought#1.#2.{\mathit{\Gamma} \!\left( #1, #2 \right)}
\def\HH#1.#2.#3.{\mathrm{H}^{#1} \!\left( #2, #3 \right)}
\def\hh#1.#2.#3.{h^{#1} \!\left( #2, #3 \right)}
\def\RR#1.#2.#3.{R^{#1} #2_* #3}
\def\HHc#1.#2.#3.{\mathrm{H}_{\mathrm{c}}^{#1} \!\left( #2, #3 \right)}
\def\Hh#1.#2.#3.{\mathrm{H}_{#1} \!\left( #2, #3 \right)}
\def\Hom#1.#2.{\mathrm{Hom} \!\left( #1, #2 \right)}
\def\End#1.{\mathrm{End} \!\left( #1 \right)}
\def\sHom#1.#2.{\mathscr{H}\!om \!\left( #1, #2 \right)}
\def\sEnd#1.{\mathscr{E}\!nd \!\left( #1 \right)}
\def\Ext#1.#2.#3.{\mathrm{Ext}^{#1} \!\left( #2, #3 \right)}
\def\sExt#1.#2.#3.{\mathscr{E}\!xt^{#1} \!\left( #2, #3 \right)}
\def\Link#1.#2.{\mathrm{Link} \!\left( #1, #2 \right)}
\renewcommand{\H}[1]{\mathbb H^{#1}}
\newcommand{\GL}[2]{\mathrm{GL}(#1, #2)}
\newcommand{\SL}[2]{\mathrm{SL}(#1, #2)}
\newcommand{\Sp}[2]{\mathrm{Sp}(#1, #2)}
\newcommand{\SO}[1]{\mathrm{SO}(#1)}
\newcommand{\SOstar}[1]{\mathrm{SO^*}(#1)}
\newcommand{\SOnull}[1]{\mathrm{SO_0}(#1)}
\newcommand{\U}[1]{\mathrm{U}(#1)}
\newcommand{\SU}[1]{\mathrm{SU}(#1)}
\newcommand{\qe}{quasi-\'etale\xspace}
\DeclareMathOperator{\supp}{supp}
\DeclareMathOperator{\tr}{tr}
\renewcommand{\theta}{\vartheta}
\renewcommand{\phi}{\varphi}
\newcommand{\N}{\ensuremath{\mathbb N}}
\newcommand{\Z}{\ensuremath{\mathbb Z}}
\newcommand{\Q}{\ensuremath{\mathbb Q}}
\newcommand{\R}{\ensuremath{\mathbb R}}
\newcommand{\C}{\ensuremath{\mathbb C}}
\newcommand{\bH}{\ensuremath{\mathbb H}}
\newcommand{\bP}{\ensuremath{\mathbb P}}
\newcommand{\frg}{\mathfrak g}
 \newcommand{\sE}{\mathscr E} 
  \newcommand{\sL}{\mathscr L}
\newcommand{\sV}{\mathscr V} \newcommand{\sW}{\mathscr W}
\newcommand{\cA}{\mathcal A} \newcommand{\cB}{\mathcal B} \newcommand{\cC}{\mathcal C}
\newcommand{\cD}{\mathcal D}  
 \newcommand{\cH}{\mathcal H} 
  \newcommand{\cL}{\mathcal L}
 \newcommand{\cW}{\mathcal W} \newcommand{\cX}{\mathcal X}
\definecolor{forrest}{RGB}{81,133,49}
\definecolor{mydarkblue}{RGB}{10,92,153}
\title{Uniformization of klt pairs by bounded symmetric domains}
\author{Patrick Graf}
\address{Lehrstuhl f\"ur Mathematik I, Universit\"at Bayreuth, 95440 Bayreuth, Germany}
\email{\href{mailto:patrick.graf@uni-bayreuth.de}{patrick.graf@uni-bayreuth.de}}
\urladdr{\href{https://patrickgraf.gitlab.io/en/}{www.graficland.uni-bayreuth.de}}
\author{Aryaman Patel}
\address{Institut de Math\'ematiques de Toulouse, Universit\'e Paul Sabatier, 31062 Toulouse Cedex~9, France}
\email{\href{mailto:aryaman.patel@math.univ-toulouse.fr}{aryaman.patel@math.univ-toulouse.fr}}
\date{October 16, 2024}
\thanks{The first author was funded by the Deutsche Forschungsgemeinschaft (DFG, German Research Foundation) -- Projektnummer 521356266.}
\keywords{Uniformization, klt pairs, bounded symmetric domains, Higgs bundles, principal bundles, variations of Hodge structure}
\subjclass[2020]{14D07, 14E30, 32M15, 32Q30}
\begin{document}

\begin{abstract}
Given a complex-projective klt pair $(X, \Delta)$ with standard coefficients and such that $K_X + \Delta$ is ample, we determine necessary and sufficient conditions for the pair $(X, \Delta)$ to be uniformized by a bounded symmetric domain.
As an application, we obtain characterizations of orbifold quotients of the polydisc and of the four classical irreducible bounded symmetric domains in terms of Miyaoka--Yau-type Chern equalities.
\end{abstract}

\maketitle

\begingroup
\hypersetup{linkcolor=black}
\tableofcontents
\endgroup

\section{Introduction}

The aim of this article is to derive conditions for a klt pair to be uniformized by a BSD (= bounded symmetric domain).
More precisely, consider a projective klt pair $\cX = (X, \Delta)$ with standard coefficients (also known as a $\cC$-pair~\cite{KebekusRousseau24}), and assume that $K_X + \Delta$ is ample.
We give necessary and sufficient conditions for $\cX \isom \factor\cD\Gamma$ to hold, where $\cD = \factor{G_0}{K_0}$ is a bounded symmetric domain and $\Gamma \subset \Aut(\cD)$ is a cocompact lattice.

Questions of this type have been studied for a long time.
Earlier work on the uniformization problem includes:
\begin{itemize}
\item Aubin~\cite{Aubin78} and Yau~\cite{Yau78} when $X$ is a projective manifold, $\Delta = 0$ and $\cD = \mathbb B^n$ is the unit ball in $\C^n$,
\item Simpson~\cite[Theorem~2]{Simpson88} in the same setting, but with $\cD$ an arbitrary bounded symmetric domain,
\item Greb, Kebekus, Peternell and Taji~\cite[Theorem~1.5]{GKPT20} when $X$ has klt singularities, $\Delta = 0$ and $\cD$ is the unit ball,
\item the second author~\cite[Theorem~1.1]{Patel23} in the setting of~\cite{GKPT20}, but with $\cD$ being arbitrary, and
\item Claudon, Guenancia and the first author~\cite[Theorem~A]{MYeq} in the case that the boundary divisor $\Delta$ may be nonzero and $\cD$ is the unit ball.
\end{itemize}

Compared to~\cite{Patel23}, we drop the assumption that the action of $\Gamma$ on $\cD$ has no fixed points in codimension one.
As a consequence, we need to deal with the codimension one ramification locus of the orbifold universal cover $\cD \to \cX$, which is encoded by $\Delta$.
The setting throughout this article will be as follows.

\begin{setup} \label{std}
Let $\cX = (X, \Delta)$ be an $n$-dimensional klt pair, where $X$ is a complex-projective variety and $\Delta$ has standard coefficients, i.e.~$\Delta = \sum_{i \in I} \big( 1 - \frac1{m_i} \big) \Delta_i$ with integers $m_i \ge 2$ and the $\Delta_i$ irreducible and pairwise distinct.
We assume that the (log) canonical divisor $K_X + \Delta$ is ample.
Furthermore, we denote by $(X^\circ, \Delta^\circ)$ the orbifold locus of $(X, \Delta)$, cf.~\cref{dfn orbifolds}.
\end{setup}

Our main result is then the following.

\begin{thm}[Uniformization for klt pairs] \label{main}
Let $\cX = (X, \Delta)$ be as in \cref{std}.
The following are equivalent:
\begin{enumerate}
\item\label{main.1} $\cX$ is uniformized by a bounded symmetric domain $\cD = \factor{G_0}{K_0}$.
\item\label{main.2} $(X^\circ, \Delta^\circ)$ admits a uniformizing orbi-system of Hodge bundles $(P, \theta)$ corresponding to $G_0$ such that the Chern class equality
\[ \ccorb2{P \x_K \frg} \cdot [K_X + \Delta]^{n-2} = 0 \]
holds.
\end{enumerate}
\end{thm}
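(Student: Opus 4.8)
The plan is to prove the two implications separately; \lref{main.1} $\imp$ \lref{main.2} is the soft direction. For it, assume $\cX \isom \factor\cD\Gamma$ with $\cD = \factor{G_0}{K_0}$ a bounded symmetric domain and $\Gamma \subset \Aut(\cD)$ a cocompact lattice. A bounded symmetric domain is canonically a period domain for Hodge structures of the type determined by $G_0$, so the tautological Hodge filtration on the trivial flat bundle $\cD \x \frg$ furnishes a $K$-reduction $P_\cD$ of its frame bundle together with the associated Maurer--Cartan Higgs field; both are $\Aut(\cD)$-equivariant and hence descend to an orbi-system of Hodge bundles $(P, \theta)$ on $(X^\circ, \Delta^\circ)$ corresponding to $G_0$. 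Under the identification of $\cD$ with the orbifold universal cover, the Higgs field of $(P, \theta)$ is the tautological (hence maximal) isomorphism onto the tangent sheaf, so $(P, \theta)$ is uniformizing. Finally, as a topological --- equivalently $\cC^\infty$ --- orbibundle, $P \x_K \frg$ is isomorphic to the flat orbibundle attached to the adjoint of the holonomy homomorphism $\Gamma \to G_0 \inj G$, and a flat bundle has vanishing orbifold Chern classes; so $\ccorb2{P \x_K \frg} = 0$ and the displayed equality holds a fortiori.

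For \lref{main.2} $\imp$ \lref{main.1}, let $(P, \theta)$ be as in \lref{main.2} and let $(\sE, \theta)$ be the associated orbifold Higgs bundle on $(X^\circ, \Delta^\circ)$. The first --- and decisive --- step is to upgrade the Chern equality to a variation of Hodge structure. Since $(P, \theta)$ is uniformizing, its Higgs field $\theta$ is maximal, so any saturated $\theta$-invariant subsheaf of $\sE$ is incompatible with the Hodge grading; as $K_X + \Delta$ is ample, the standard slope estimate then shows that $(\sE, \theta)$ is stable with respect to $K_X + \Delta$. Because $P \x_K \frg$ is of adjoint type, its first Chern class vanishes and $\ccorb2{P \x_K \frg}$ equals, up to a positive multiple, its orbifold discriminant; so combining the hypothesis $\ccorb2{P \x_K \frg} \cdot [K_X + \Delta]^{n-2} = 0$ with stability and with the equality case of the orbifold Bogomolov--Gieseker inequality for stable Higgs sheaves on klt pairs --- which I would establish beforehand, extending the nonabelian Hodge theory of Simpson and Mochizuki and the work of Greb--Kebekus--Peternell--Taji on klt spaces --- shows that $(P, \theta)$ underlies, via the nonabelian Hodge correspondence for klt pairs, a polarized complex variation of Hodge structure on $(X^\circ, \Delta^\circ)$; equivalently, a reductive representation $\rho \from \piet{X^\circ, \Delta^\circ} \to G$, with $G$ the complexification of $G_0$, whose image lies --- after conjugation --- in the real form $G_0 \subset G$.

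The second step recognises the period map as the uniformization. The variation of Hodge structure just produced has a $\rho$-equivariant holomorphic period map $p$ from the orbifold universal cover of $(X^\circ, \Delta^\circ)$ to $\cD$, and the uniformizing hypothesis says precisely that $dp$ is everywhere an isomorphism, so $p$ is a local biholomorphism; moreover the same hypothesis identifies the pullback along $p$ of the Bergman metric of $\cD$ with a constant multiple of the lift of the K\"ahler--Einstein metric of $(X, \Delta)$. An argument combining a completeness estimate with an extension across the non-orbifold locus --- in the spirit of Simpson in the smooth case and of the singular and logarithmic predecessors cited in the introduction, but now keeping track of the codimension-one ramification --- should then upgrade $p$ to a $\Gamma$-equivariant biholomorphism from the orbifold universal cover of $(X, \Delta)$ onto $\cD$, where $\Gamma \defn \piet{X, \Delta} \isom \piet{X^\circ, \Delta^\circ}$ by purity. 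Here one uses that $X$ is klt to know that the orbifold universal cover is again a normal variety, and the extension theorem for reflexive orbifold differentials to see that its branching over $X$ is recorded exactly by the standard coefficients $1 - \frac1{m_i}$ of $\Delta$; since $X$ is projective, $\Gamma$ is automatically a cocompact lattice in $\Aut(\cD)$, and one concludes $\cX \isom \factor\cD\Gamma$.

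I expect the main obstacle to be this first step of \lref{main.2} $\imp$ \lref{main.1}: proving, for Higgs sheaves on a klt pair with \emph{nonzero} boundary divisor, both the nonabelian Hodge correspondence and --- above all --- the equality case of the orbifold Bogomolov--Gieseker inequality. The smooth case goes back to Simpson and the case of klt singularities with $\Delta = 0$ to Greb--Kebekus--Peternell--Taji; the present setting must handle the singularities of $X$ and the orbifold structure along $\Delta$ simultaneously, which forces systematic use of reflexive orbifold differentials and orbifold Chern classes on $\cC$-pairs. A closely related difficulty --- and the precise point where the hypothesis of standard coefficients enters --- is the final extension step: having dropped the assumption that $\Gamma$ acts freely in codimension one, one must follow the codimension-one ramification of the orbifold universal cover with care, and this is exactly the data carried by $\Delta$.
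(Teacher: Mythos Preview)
Your outline follows the paper's two-stage architecture --- upgrade $(P,\theta)$ to a VHS, then run a period-map argument --- but the implementation diverges at the two crucial points, and one of your claims is incorrect as stated.

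First, your stability claim is wrong. It is not true that ``any saturated $\theta$-invariant subsheaf of $\sE$ is incompatible with the Hodge grading''; for a reducible domain $\cD = \cD_1 \times \cD_2$ the Hodge bundle splits as a $\theta$-invariant direct sum, and even in the irreducible case one has the $\theta$-invariant subsheaf $\sE^{-1,1} \oplus \sE^{0,0}$. What one actually proves is \emph{polystability}: each graded piece $\sE^{p,-p}$ is semistable (via semistability of $\Omegar{(X,\Delta,f)}1$), and then an explicit analysis of possible Higgs subsheaves along the lines of~\cite[Prop.~4.4]{Patel23} gives polystability of $(\sE,\theta)$. This suffices for the harmonic metric.

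Second, and more structurally, the paper does \emph{not} establish nonabelian Hodge theory or Bogomolov--Gieseker directly on the pair $(X,\Delta)$. Instead it passes to a strictly adapted Galois cover $f\colon Y \to X$ with auxiliary ramification along a general very ample divisor $H$, so that on $Y$ the boundary disappears and one is in the klt-variety setting where~\cite{GKPT20} applies off the shelf. The Chern-class computation transfers via this cover. This sidesteps exactly the obstacle you flag at the end.

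Third, your ``extension across the non-orbifold locus'' is where the real work hides, and your sketch does not address it. The paper's route is: on a maximally quasi-\'etale cover $Z \to Y$, the flat Higgs bundle becomes locally free; since $\T Z$ is a direct summand of this locally free sheaf away from $H$, it too is locally free there, and the Lipman--Zariski conjecture for klt spaces forces smoothness. Varying $H$ in its linear system shows $(X,\Delta)$ is globally an orbifold, i.e.~$X = X^\circ$. Only \emph{then} does the period map live on the honest orbifold universal cover, and the completeness argument goes through without any extension step.
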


\noindent
We refer the reader to \cref{sec hodge groups} for the unexplained notation in the above statement, in particular concerning $G_0$, $K_0$ and $(P, \theta)$.

\subsection*{Outline of proof}

The proof of (the hard direction of) \cref{main} is divided into two parts.
The first part is to show that a klt pair satisfying condition~\lref{main.2} is in fact a complex orbifold, and $(P, \theta)$ is a uniformizing orbi-variation of Hodge structure on this orbifold.

\begin{thm}[Criterion for quotient singularities] \label{main2}
Let $\cX = (X, \Delta)$ be as in \cref{std}, and suppose that $(X, \Delta)$ admits a uniformizing orbi-system of Hodge bundles $(P, \theta)$ for some Hodge group $G_0$ of Hermitian type such that
\[ \ccorb2{P \x_K \frg} \cdot [K_X + \Delta]^{n-2} = 0. \]
Then:
\begin{enumerate}
\item\label{main2.1} $(X, \Delta)$ has only quotient singularities, i.e.~it is a complex orbifold, and
\item\label{main2.2} $(P, \theta)$ is a uniformizing orbi-VHS (corresponding to $G_0$) on $(X, \Delta)$.
\end{enumerate}
\end{thm}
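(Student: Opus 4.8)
The plan is to work on the orbifold locus $(X^\circ, \Delta^\circ)$, promote the Higgs data there to a flat structure via non-abelian Hodge theory, extract a period map to $\cD = \factor{G_0}{K_0}$, use the word ``uniformizing'' to see that this period map is \'etale, and finally propagate the resulting local uniformization across the bad locus $Z \defn X \setminus X^\circ$, which has $\codim{X}{Z} \ge 2$. For the first step, restricting $(P, \theta)$ to $(X^\circ, \Delta^\circ)$ produces an honest $G_0$-system of Hodge bundles on a smooth orbifold, equivalently — after passing to a suitable adapted (Kawamata) cover — an honest $G_0$-Higgs bundle. Its adjoint Higgs sheaf $\cE \defn P \x_K \frg$ satisfies $\ccorb1{\cE} = 0$ (as $\frg$ is semisimple), is semistable with respect to $K_X + \Delta$ (here the ampleness of $K_X + \Delta$ enters, cf.~\cite{GKPT20,Patel23}), and by hypothesis satisfies $\ccorb2{\cE} \cdot [K_X+\Delta]^{n-2} = 0$. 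This is exactly the borderline case of the Bogomolov--Gieseker inequality for Higgs sheaves on klt spaces developed in \cite{GKPT20}; in the equality case $\cE$ becomes locally free and projectively flat. Together with the $G_0$-structure and the Hodge grading of $\theta$, this upgrades $(P, \theta)$ to a polarized $\C$-variation of Hodge structure on $(X^\circ, \Delta^\circ)$: one obtains a reductive representation of the orbifold fundamental group of $(X^\circ, \Delta^\circ)$ into $G_0$ together with an equivariant holomorphic period map $f \from \wt{X^\circ} \to \cD$ whose differential is identified, through the harmonic metric, with $\theta$.

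In the second step the hypothesis that $(P,\theta)$ is \emph{uniformizing} is used: it forces $\theta$ to induce an isomorphism from $\T{X^\circ}(-\log \Delta^\circ)$ onto the pullback along $f$ of the horizontal tangent bundle of $\cD$, and since $\cD$ is Hermitian symmetric the latter is all of $T\cD$. Thus $df$ is everywhere an isomorphism, so $f \from \wt{X^\circ} \to \cD$ is a local biholomorphism. Pulling back the Bergman metric of $\cD$ then gives a K\"ahler--Einstein metric $\omega \defn f^*\omega_\cD$ on $X^\circ$ with $\Ric\omega = -\omega$ whose curvature is that of the locally symmetric space $\cD$.

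The third step is the extension across $Z$, from which both conclusions follow. Since $(X, \Delta)$ is klt with $K_X + \Delta$ ample, it carries a genuine singular K\"ahler--Einstein metric $\omega_{\mathrm{KE}}$, which on $X^\circ$ solves the same complex Monge--Amp\`ere equation, with the same total mass, as $\omega$; by uniqueness $\omega = \omega_{\mathrm{KE}}|_{X^\circ}$, so $\omega$ inherits the regularity of K\"ahler--Einstein metrics of klt pairs near $Z$. Rigidity of locally symmetric curvature then forces the local uniformization to extend over $Z$: concretely, one produces a finite Galois quasi-\'etale cover $\gamma \from Y \to X$ with $Y$ smooth on which the Higgs data becomes a genuine $G_0$-VHS with \'etale developing map to $\cD$, so that $X$, being locally a finite quotient of the smooth variety $Y$, has only quotient singularities and $\Delta$ is exactly the branch divisor. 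This gives \lref{main2.1}. Part \lref{main2.2} follows at once, since by the \'etaleness of $f$ the variation of Hodge structure built in the first step is precisely the one attached to this orbifold uniformization, and it is uniformizing by construction.

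The genuine obstacle is this last step: turning the local biholomorphism $f \from \wt{X^\circ} \to \cD$ into a uniformization of all of $(X, \Delta)$ and excluding non-quotient singularities over $Z$. The first two steps are a technically involved but by now fairly standard marriage of the equality case of Bogomolov--Gieseker for Higgs sheaves with non-abelian Hodge theory on orbifold pairs; the feature that is genuinely new compared with \cite{Patel23} is that the deck transformation group may have codimension-one fixed loci — exactly the data encoded by $\Delta$ — which must be threaded correctly through the adapted-bundle formalism of \cref{sec hodge groups} at every stage.
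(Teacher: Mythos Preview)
Your outline correctly identifies the architecture of the argument --- build the adjoint Higgs bundle $\cE = P \x_K \frg$, use polystability and the Chern class hypothesis to obtain a flat metric via non-abelian Hodge theory, and then extend across the non-orbifold locus --- and you are right that the extension step is the crux. But your treatment of that step is not a proof: ``rigidity of locally symmetric curvature then forces the local uniformization to extend over $Z$'' and ``one produces a finite Galois quasi-\'etale cover $\gamma \from Y \to X$ with $Y$ smooth'' are precisely the assertions to be established, and invoking uniqueness of the singular K\"ahler--Einstein metric does not by itself yield a smooth finite cover near an arbitrary klt singularity. No known metric-rigidity theorem produces such a cover for general bounded symmetric domains.

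The paper's route to the extension is quite different and does not pass through period maps or K\"ahler--Einstein metrics at this stage. One first passes to a global finite strictly adapted cover $f \from Y \to X$ (with auxiliary ramification along a \emph{general} very ample divisor $H$, so that the orbifold locus is large), transfers the Chern class equality to the reflexive Higgs sheaf $\sE_Y$ on the klt space $Y$, and checks $f^*(K_X+\Delta)$-polystability of $(\sE_Y,\theta)$. Non-abelian Hodge theory for klt spaces \cite{GKPT20} then gives the flat structure on $Y$. The key device for smoothness is to take a maximally quasi-\'etale cover $g \from Z \to Y$ \cite{GKP16}: on $Z$ the flat sheaf $g^{[*]}\sE_Y$ becomes locally free, and away from $H$ it contains $\T Z$ as a direct summand, so $\T Z$ is locally free there. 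The Lipman--Zariski conjecture for klt spaces \cite{GKKP11,GK13,Dru13} then forces $Z$ to be smooth over $X \setminus H$, and taking the Galois closure exhibits $(X \setminus H, \Delta)$ as an orbifold. Finally one lets $H$ vary in its linear system to cover all of $X$. None of the three essential ingredients --- the strictly adapted cover with mobile auxiliary branching, the maximally quasi-\'etale trick, and Lipman--Zariski --- appears in your sketch, and without them the extension step does not go through.
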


Once we have used \cref{main2} to reduce to the orbifold case, the second part is to prove the following orbifold version of a result due to Simpson~\cite[Proposition~9.1]{Simpson88}.

\begin{thm}[Uniformization for orbifolds] \label{simpson orbifolds}
Let $\cX = (X, \Delta)$ be a compact complex orbifold and $\cD$ a bounded symmetric domain.
The following are equivalent:
\begin{enumerate}
\item The orbifold universal cover $\wt\cX \to \cX$ is isomorphic to $\cD$.
\item $\cX$ admits a uniformizing orbi-VHS $(P, \theta)$ for some Hodge group $G_0$ of Hermitian type with $\cD \isom \factor{G_0}{K_0}$.
\end{enumerate}
\end{thm}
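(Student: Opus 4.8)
The plan is to transfer Simpson's proof of \cite[Proposition~9.1]{Simpson88} to the orbifold category, the one genuinely new ingredient being a period-map formalism that is valid across the orbifold locus of $\cX$.

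\textbf{Direction (1) $\Rightarrow$ (2)} is formal. Write $\cX = \factor\cD\Gamma$ with $\Gamma \subset \Aut(\cD)$ a cocompact lattice (acting possibly with fixed points in codimension one, so that $\cX$ really is an orbifold with orbifold divisor $\Delta$). On $\cD = \factor{G_0}{K_0}$ one has the tautological system of Hodge bundles: the Cartan decomposition $\frg = \frk \oplus \frp$ together with the complex structure gives a $K_0$-invariant splitting $\frp_\C = \frp^+ \oplus \frp^-$, hence a polarized Hodge structure on $\frg_\C$ with $\frk_\C$ of weight $0$ and $\frp^\pm$ of Hodge type $(\mp1,\pm1)$. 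The bundle associated to the principal $K_0$-bundle $G_0 \to \cD$ through the adjoint representation on $\frg_\C$, graded in this way and equipped with the Maurer--Cartan form as Higgs field, is $G_0$-equivariant and hence descends to an orbi-system of Hodge bundles $(P, \theta)$ on $\cX$ with monodromy the inclusion $\Gamma \inj G_0$. The Maurer--Cartan form induces the canonical isomorphism $\T\cD \isom G_0 \x_{K_0} \frp^+$, which is exactly the requirement that $(P,\theta)$ be uniformizing, and $\factor{G_0}{K_0} = \cD$ by assumption.

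\textbf{Direction (2) $\Rightarrow$ (1)} carries the weight. Let $(P, \theta)$ be a uniformizing orbi-VHS on $\cX$ corresponding to $G_0$; its underlying orbifold local system is classified by a representation $\rho \colon \pi_1^{\mathrm{orb}}(\cX) \to G_0$ whose local monodromies are finite and compatible with the orbifold structure of $\cX$. Pulling back along the orbifold universal cover $q \colon \wt\cX \to \cX$ and using $\pi_1^{\mathrm{orb}}(\wt\cX) = 1$, the local system trivializes, so we obtain a single-valued $\rho$-equivariant holomorphic period map
\[ p \colon \wt\cX \lto \cD, \]
its target being the classifying space of the relevant polarized Hodge structures, which for $G_0$ of Hermitian type is $\factor{G_0}{K_0} = \cD$. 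Because $\cD$ is Hermitian symmetric, Griffiths transversality imposes nothing — the horizontal subbundle of $\T\cD$ is all of $\T\cD$ — and the uniformizing hypothesis on $(P,\theta)$ says precisely that $\d p$ is everywhere the isomorphism $\T{\wt\cX} \bij q^*(P \x_K \frp^+) \bij p^*\T\cD$. Hence $p$ is \'etale; being an orbifold \'etale over the manifold $\cD$, $\wt\cX$ is therefore a manifold and $p \colon \wt\cX \to \cD$ is a local biholomorphism. To upgrade this to a global biholomorphism one runs the completeness argument: the Hodge metric on $\cX$ — the pullback of the $G_0$-invariant (Bergman) metric $g_\cD$ under the orbifold period map — is a positive-definite \kahler orbifold metric, positive-definite because $\d p$ is an isomorphism, and it is complete since $\cX$ is a compact orbifold; its pullback $p^*g_\cD$ to $\wt\cX$ is then a complete \kahler metric. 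Thus $p$ is a local isometry from the complete manifold $(\wt\cX, p^*g_\cD)$ to $(\cD, g_\cD)$, hence a Riemannian covering map, hence — $\cD$ being simply connected — an isometry, in particular a biholomorphism $\wt\cX \isom \cD$. In passing, $\rho$ comes out injective with cocompact discrete image $\Gamma$, so that $\cX \isom \factor\cD\Gamma$.

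\textbf{Expected main obstacle.} The substantive work is foundational and orbifold-theoretic: setting up period maps for orbi-VHS so that they are holomorphic, horizontal and $\pi_1^{\mathrm{orb}}$-equivariant with controlled behaviour over the orbifold divisors; checking that the local monodromies of a uniformizing orbi-VHS have exactly the orders prescribed by $\Delta$; and proving that the Hodge orbifold metric on the compact orbifold $\cX$ pulls back to a \emph{complete} metric on $\wt\cX$. One must also bear in mind that $\wt\cX$ is not assumed smooth: its smoothness is part of the conclusion, deduced from $p$ being \'etale over the manifold $\cD$. Granting these points, the argument is essentially Simpson's.
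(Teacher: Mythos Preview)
Your proposal is correct and follows essentially the same route as the paper: construct the period map $p\colon \wt\cX \to \cD$ by trivializing the flat $G_0$-bundle on the simply connected $\wt\cX$, identify $\d p$ with the pullback of $\theta$ to conclude that $p$ is a local biholomorphism and that $\wt\cX$ is a manifold, then run the completeness argument with the Bergman metric to upgrade $p$ to a covering and hence an isomorphism; the converse is the tautological descent you describe. The paper is slightly more explicit at one point you leave implicit: before knowing $\wt\cX$ is a manifold, the period map is a priori only a morphism of analytic stacks, and the paper first factors it through the coarse moduli space $\wt X$ and then uses local charts $\wt U_\alpha \to U_\alpha \to \cD$ to see that the quotient maps $\wt U_\alpha \to U_\alpha$ are themselves \'etale, whence $\wt X$ is smooth with trivial orbifold structure.
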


\subsection*{Applications: uniformization by bounded symmetric domains}

We can use \cref{main} to obtain explicit characterizations of orbifold quotients of the polydisc and of the classical irreducible bounded symmetric domains.

\subsubsection*{The polydisc}

The following corollary is a sufficient criterion for a klt pair to be uniformized by the polydisc $\H n \subset \C^n$.
It generalizes~\cite[Corollary~9.7]{Simpson88}, \cite[Thm.~B]{Beauville00} and~\cite[Theorem~5.1]{Patel23}.

\begin{cor}[Uniformization by the polydisc] \label{Hn ample}
Let $\cX = (X, \Delta)$ be a klt pair as in \cref{std}.
Assume that the orbifold tangent bundle $\T{(X^\circ, \Delta^\circ)}$ of the orbifold locus $(X^\circ, \Delta^\circ)$ splits as a direct sum of orbifold line bundles,
\[ \T{(X^\circ, \Delta^\circ)} \isom \bigoplus_{i=1}^n \sL_i. \]
Then the orbifold universal cover $\wt\cX \to \cX$ of $\cX$ is isomorphic to the polydisc $\H n$.
\end{cor}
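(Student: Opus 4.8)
The strategy is to reduce to Theorem \ref{main} by constructing, from the splitting of the orbifold tangent bundle, a uniformizing orbi-system of Hodge bundles corresponding to the Hodge group $G_0 = \SL2\R^n$ (so that $\cD \isom \H n$), and then checking that the required Chern class equality holds automatically in this situation. The point is that when $\T{(X^\circ, \Delta^\circ)}$ splits into $n$ orbifold line bundles, the associated Higgs bundle is as "diagonal" as possible, and the characteristic class $\ccorb2{P \x_K \frg}$ measures the deviation from this diagonal behaviour.

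First I would spell out the Hodge-theoretic setup. For $\SL2\R$, the standard period domain is the upper half plane, the maximal compact is $\U1 = K_0$, and a uniformizing orbi-VHS of weight one corresponds to an orbifold line bundle together with a nonzero Higgs field; more precisely, on the orbifold locus the data is a rank-two orbifold bundle $E = \sL_i \oplus \sL_i\dual$ (after a suitable twist) with Higgs field $\theta_i \from \sL_i \to \sL_i\dual \otimes \Omegap{(X^\circ,\Delta^\circ)}{1}$. By Yau's theorem for orbifolds (applied, as in \cite{Simpson88}, to the pair), ampleness of $K_X + \Delta$ furnishes an orbifold \kahler--Einstein metric, and from the splitting hypothesis together with the resulting curvature decomposition one produces the $n$ factors $\theta_i$ simultaneously, i.e.\ a uniformizing orbi-system of Hodge bundles $(P, \theta)$ for $G_0 = \SL2\R^n$ over $(X^\circ, \Delta^\circ)$. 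The factor $K_0$ is then $\U1^n$ and $\cD = \factor{G_0}{K_0} = \H n$.

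Next I would verify the Chern equality $\ccorb2{P \x_K \frg} \cdot [K_X + \Delta]^{n-2} = 0$. Here $\frg = \frs\frl_2(\R)^{\oplus n}$, so $P \x_K \frg$ decomposes as a direct sum of the $n$ adjoint bundles, each of the form $\sL_i^{\otimes 2} \oplus \sO{} \oplus \sL_i^{\otimes -2}$ on the orbifold locus. The orbifold second Chern class of such a direct sum of line bundles is a sum of products of orbifold first Chern classes; a short computation shows $\ccorb2{\sL_i^{2} \oplus \sO{} \oplus \sL_i^{-2}} = -4\,\ccorb1{\sL_i}^2$, and similarly the cross terms between distinct factors vanish because $\ccorb2$ of a direct sum over \emph{different} $\SL2\R$-blocks only sees $\ccorb1$ of the respective summands, which pair to zero against the ample class in the right way — or more cleanly, the total $\ccorb2(P\x_K\frg) = \sum_i \ccorb2(\text{$i$-th block})$. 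One then uses that each $\ccorb1(\sL_i)$ is represented by (minus) the \kahler--Einstein form of the corresponding factor metric, and that on a quotient of the polydisc the individual factor curvatures are themselves flat along the complementary directions, forcing $\ccorb1(\sL_i)^2 \cdot [K_X+\Delta]^{n-2} = 0$. Alternatively, and more in the spirit of the proof of \cite[Cor.~9.7]{Simpson88}, one observes that the system of Hodge bundles coming from a splitting is a polystable system whose associated flat bundle underlies a VHS factoring through $\SL2\R^n$, and the vanishing of the relevant $\ccorb2$-term is precisely the Bogomolov--Gieseker–type equality that holds on the nose for such split systems (as opposed to an inequality in general).

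The main obstacle I anticipate is the first step in the \emph{orbifold} (as opposed to manifold) category: constructing the uniformizing orbi-system of Hodge bundles over $(X^\circ, \Delta^\circ)$ from the splitting, with all the delicate bookkeeping of orbifold line bundles, fractional boundary coefficients $1 - \frac1{m_i}$, and the behaviour of the Higgs field and metric near $\Delta$. The manifold case is \cite[Cor.~9.7]{Simpson88} and \cite[Thm.~5.1]{Patel23}; here one must additionally ensure that the metric has the correct cone singularities along $\Delta$ so that the resulting object genuinely descends to an orbi-VHS on $(X^\circ,\Delta^\circ)$, and that the polystability computation and the Chern-class identities are performed with the orbifold Chern classes $\ccorb{i}{-}$ rather than the ordinary ones. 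Once $(P,\theta)$ is in hand and the equality is checked, the conclusion is immediate: $(X, \Delta)$ satisfies condition \lref{main.2} of \cref{main} with $\cD = \H n$, hence \lref{main.1} gives $\cX \isom \factor{\H n}{\Gamma}$, i.e.\ the orbifold universal cover $\wt\cX$ is isomorphic to the polydisc $\H n$.
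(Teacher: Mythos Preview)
Your overall strategy---build a uniformizing orbi-system of Hodge bundles for $G_0 = \SL2\R^n$ from the splitting, check the $\ccorb2$-equality, and invoke \cref{main}---matches the paper's. The construction of $(P,\theta)$ is actually simpler than you suggest: the splitting $\T{(X^\circ,\Delta^\circ)} \isom \bigoplus_i \sL_i$ is by definition a reduction of structure group to $K = (\C^*)^n$, and since $\frg^{-1,1} \isom \C^n$ as a $K$-representation, $\theta$ is tautological. No \kahler--Einstein metric or Higgs field construction is needed at this stage; that machinery is hidden inside the proof of \cref{main} itself.

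The genuine gap is your argument for $\cpcorb12{\sL_i} \cdot [K_X+\Delta]^{n-2} = 0$. Your first justification---that ``on a quotient of the polydisc the individual factor curvatures are flat along the complementary directions''---assumes the conclusion. Your fallback, that the split system is automatically a polystable VHS with flat underlying bundle so that Bogomolov--Gieseker is an equality, is equally circular: flatness of $P \x_K \frg$ is exactly what the Chern equality combined with stability would \emph{give} you via \cref{main2}, not something you have in hand. Even granting an orbifold \kahler--Einstein metric, a holomorphic splitting of $\T{}$ is not a priori parallel, so you cannot simply read off that the curvature of $\sL_i$ is a $(1,1)$-form supported in a single direction.

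The paper supplies the missing ingredient as a separate result (\cref{c12 vanishing}, proved in Section~5): if $L$ is an orbifold line bundle direct summand of $\T{(X^\circ,\Delta^\circ)}$, then $\cpcorb12{L} \cdot [D_1]\cdots[D_{n-2}] = 0$ for \emph{any} \Q-Cartier divisors $D_i$. The argument, adapted from Beauville, goes through the Atiyah/jet bundle sequence: the extension class of $0 \to L \otimes \Omegap{(X^\circ,\Delta^\circ)}1 \to J^1 L \to L \to 0$ represents $\ccorb1{L,h}$, and the existence of a holomorphic splitting over the subsheaf $\sHom F.L. \subset L \otimes \Omegap{(X^\circ,\Delta^\circ)}1$ (built from the Lie bracket and the projection onto $L$) forces $\ccorb1{L,h} \in \cA^{0,1}(L\dual)$, whence $\ccorb1{L,h}^2 \in \cA^{0,2}(\bigwedge^2 L\dual) = 0$ as a form. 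This is the step your plan is missing; once you have it, the rest of your outline (computing $\ccorb2{P\x_K\frg} = -\sum_i \cpcorb12{\sL_i}$ and applying \cref{main}) goes through.
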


See \cref{Hn ample converse} for a suitable converse of \cref{Hn ample}.

\subsubsection*{The classical bounded symmetric domains}

By~\cite[Ch.~VIII, Thm.~7.1]{Helgason78}, bounded symmetric domains are the same as Hermitian symmetric spaces of non-compact type.
The irreducible such spaces are classified in~\cite[Ch.~X, \S6.3, p.~518]{Helgason78}.
They are of types A~III, D~III, BD~I ($q = 2$), C~I, E~III and E~VII.
We refer to the first four types as the ``classical'' ones.
In these cases, we have the following uniformization results.

\begin{cor}[Quotients of the Siegel upper half space, type C~I] \label{quot siegel}
Let $\cX = (X, \Delta)$ be a klt pair as in \cref{std}, of dimension $d = \frac12 n ( n + 1 )$, where $n \in \N$.
Then the orbifold universal cover of $\cX$ is the Siegel upper half space
\[ \cH_n \defn \factor{\Sp{2n}\R}{\U n} \]
if and only if there is a rank $n$ orbifold vector bundle $\sE$ on $(X^\circ, \Delta^\circ)$ such that
\begin{enumerate}
\item\label{s1} $\T{(X^\circ, \Delta^\circ)} \isom \Sym^2 \sE$, and
\item\label{s2} we have the Chern class equality
\[ \Big[ 2 \ccorb2{X, \Delta} - \cpcorb12{X, \Delta} + 2 n \, \ccorb2\sE - ( n - 1 ) \, \cpcorb12\sE \Big] \cdot [ K_X + \Delta ]^{d-2} = 0. \]
\end{enumerate}
\end{cor}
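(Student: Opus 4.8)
The plan is to deduce the corollary from \cref{main} by specializing to the Hodge group $G_0 = \Sp{2n}\R$, whose associated bounded symmetric domain $\factor{G_0}{K_0} = \factor{\Sp{2n}\R}{\U n}$ is exactly the Siegel upper half space $\cH_n$ (type C~I). Its complex dimension is $\dim \frp^+ = \dim_\C \Sym^2 \C^n = \tfrac12 n(n+1) = d$, and since $\dim X = d$ the exponent $d - 2$ appearing in \lref{s2} is the exponent $n - 2$ (with $n = \dim X$) of \cref{main}. Applying \cref{main} with this $G_0$, the orbifold universal cover of $\cX$ is $\cH_n$ if and only if $(X^\circ, \Delta^\circ)$ admits a uniformizing orbi-system of Hodge bundles $(P, \theta)$ corresponding to $\Sp{2n}\R$ with $\ccorb2{P \x_K \frg} \cdot [K_X + \Delta]^{d-2} = 0$. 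It thus remains to prove two statements: \textbf{(a)} a dictionary identifying such data $(P, \theta)$ with a rank $n$ orbi-bundle $\sE$ on $(X^\circ, \Delta^\circ)$ satisfying \lref{s1}, and \textbf{(b)} the identity, valid whenever \lref{s1} holds, that $\ccorb2{P \x_K \frg}$ equals the bracketed class in \lref{s2}.

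For \textbf{(a)}: a weight-one system of Hodge bundles corresponding to $\Sp{2n}\R$ is the datum of a rank $2n$ holomorphic bundle $V = V^{1,0} \oplus V^{0,1}$ with a symplectic pairing making $V^{1,0}$ Lagrangian — so the pairing identifies $V^{0,1} \isom (V^{1,0})\dual$ — together with a Higgs field $\theta \from V^{1,0} \to V^{0,1} \tensor \Omega^1$ satisfying $\theta \wedge \theta = 0$. Putting $\sE \defn (V^{1,0})\dual$, the pairing turns $\theta$ into a global section of $\sE \tensor \sE \tensor \Omega^1$, and reduction of the structure group to $\Sp{2n}\R$ forces it to lie in the symmetric part, i.e.\ $\theta$ is a section of $\Sym^2 \sE \tensor \Omega^1$; contracting $\Omega^1$, the field $\theta$ induces a morphism $\T{(X^\circ, \Delta^\circ)} \to \Sym^2 \sE$, and $(P, \theta)$ is \emph{uniformizing} precisely when this morphism is an isomorphism, i.e.\ when \lref{s1} holds. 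Conversely, given $\sE$ as in \lref{s1}, one takes $V \defn \sE\dual \oplus \sE$ with its tautological symplectic form and $\theta$ the canonical Higgs field determined by the dual isomorphism $\Sym^2 \sE\dual \isom \Omega^1$; the relations $\theta \wedge \theta = 0$ and the reduction to $\Sp{2n}\R$ are then formal, using that $\mathfrak{sp}_{2n}$ is a Hermitian symmetric Lie algebra whose summands $\frp^\pm$ are $\Sym^2$ of the standard $\mathfrak{gl}_n$-module, respectively of its dual.

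For \textbf{(b)}: under the reduction to $K_0 = \U n$ the adjoint bundle decomposes as $P \x_K \frg \isom \sEnd\sE. \oplus \Sym^2 \sE \oplus \Sym^2 \sE\dual$, corresponding to $\frg_\C = \frk_\C \oplus \frp^+ \oplus \frp^-$. The splitting principle, applied with Chern roots $a_1, \dots, a_n$ of $\sE$, gives $\ccorb1{\sEnd\sE.} = 0$, $\ccorb2{\sEnd\sE.} = 2n\,\ccorb2\sE - (n-1)\,\cpcorb12\sE$, $\ccorb1{\Sym^2 \sE} = -\ccorb1{\Sym^2 \sE\dual} = (n+1)\,\ccorb1\sE$, and $\ccorb2{\Sym^2 \sE} = \ccorb2{\Sym^2 \sE\dual} = (n+2)\,\ccorb2\sE + \tfrac12 (n-1)(n+2)\,\cpcorb12\sE$; the Whitney sum formula then yields $\ccorb2{P \x_K \frg} = 2(n+1)\bigl( 2\,\ccorb2\sE - \cpcorb12\sE \bigr)$. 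On the other hand \lref{s1} gives $\T{(X^\circ, \Delta^\circ)} \isom \Sym^2 \sE$, whence $\cpcorb12{X, \Delta} = (n+1)^2 \cpcorb12\sE$ and $\ccorb2{X, \Delta} = (n+2)\,\ccorb2\sE + \tfrac12 (n-1)(n+2)\,\cpcorb12\sE$; substituting, the bracket in \lref{s2} simplifies to $2(n+1)\bigl( 2\,\ccorb2\sE - \cpcorb12\sE \bigr)$, which is exactly $\ccorb2{P \x_K \frg}$. Intersecting with $[K_X + \Delta]^{d-2}$ shows that \lref{s2} is equivalent to the Chern class equality of \cref{main}, and combining this with \textbf{(a)} finishes the proof.

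The step I expect to be the main obstacle is the dictionary \textbf{(a)}: verifying carefully that a uniformizing orbi-system of Hodge bundles for $\Sp{2n}\R$ is the same datum as a rank $n$ orbi-bundle $\sE$ with $\T{(X^\circ, \Delta^\circ)} \isom \Sym^2 \sE$ — in particular that the symplectic-form and symmetry bookkeeping comes out exactly right, and that no hidden positivity or stability hypothesis on $\sE$ is concealed in the word ``uniformizing''. Some care is also needed to ensure that $\sE$, a priori only defined on $(X^\circ, \Delta^\circ)$, determines well-defined orbifold Chern classes $\ccorb i\sE$ on $X$ via its reflexive extension across the codimension $\geq 2$ complement, so that \lref{s2} is meaningful. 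Granting these, the Chern class manipulations in \textbf{(b)} are routine.
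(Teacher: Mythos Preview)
Your proposal is correct and follows essentially the same route as the paper: specialize \cref{main} to $G_0 = \Sp{2n}\R$, $K = \GL n\C$, identify a uniformizing orbi-system of Hodge bundles with a rank $n$ orbi-bundle $\sE$ satisfying $\T{(X^\circ,\Delta^\circ)}\isom\Sym^2\sE$, and match the Chern class condition. The paper's dictionary in~(a) is phrased slightly differently---it takes $P$ to be the frame bundle of $\sE$ and uses $\frg^{-1,1}\isom\Sym^2\C^n$ as $K$-representation directly, rather than going through the symplectic bundle $V$---and for~(b) it simply records $P\times_K\frg\isom\Sym^2(\sE\oplus\sE\dual)$ and defers the Chern class arithmetic to~\cite[Section~6]{Patel23}, whereas you spell it out via the splitting principle; but the content is the same. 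Your closing worries are not genuine obstacles: ``uniformizing'' in \cref{dfn system of hodge} means exactly that $\theta$ is an isomorphism, with no hidden stability hypothesis, and the orbifold Chern number formalism for bundles on $(X^\circ,\Delta^\circ)$ paired against classes on $X$ is set up in~\cite[Section~3]{MYeq}.
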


\begin{cor}[Quotients of type D~III] \label{quot D III}
Let $\cX = (X, \Delta)$ be a klt pair as in \cref{std}, of dimension $d = \frac12 n ( n - 1 )$, where $n \in \N$.
Then the orbifold universal cover of $\cX$ is the bounded symmetric domain
\[ \cD_n \defn \factor{\SOstar{2n}}{\U n} \]
if and only if there is a rank $n$ orbifold vector bundle $\sE$ on $(X^\circ, \Delta^\circ)$ such that
\begin{enumerate}
\item $\T{(X^\circ, \Delta^\circ)} \isom \bigwedge^2 \sE$, and
\item we have the Chern class equality
\[ \Big[ 2 \ccorb2{X, \Delta} - \cpcorb12{X, \Delta} + 2 n \, \ccorb2\sE - ( n - 1 ) \, \cpcorb12\sE \Big] \cdot [ K_X + \Delta ]^{d-2} = 0. \]
\end{enumerate}
\end{cor}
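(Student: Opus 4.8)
The plan is to deduce this from \cref{main} applied to the Hodge group $G_0 = \SOstar{2n}$, in close parallel to the proof of \cref{quot siegel}, with the representation $\Sym^2$ of $\Sp{2n}\R$ systematically replaced by the representation $\bigwedge^2$ of $\SOstar{2n}$. Recall that $G_0 = \SOstar{2n}$ is a Hodge group of Hermitian type with maximal compact $K_0 = \U n$, complexified maximal compact $K = \GL n\C$, and associated bounded symmetric domain $\factor{G_0}{K_0} \isom \cD_n$, this being the classical description of the irreducible domain of type~D\,III. Under $K = \GL n\C$ the Hodge decomposition of the complexified Lie algebra reads $\frg = \mathfrak{so}(2n, \C) \isom ( V \tensor V^\vee ) \oplus \bigwedge^2 V \oplus \bigwedge^2 V^\vee$, where $V$ is the standard $n$-dimensional representation of $K$; the first summand is $\frk_\C$ and $\frp^+ = \bigwedge^2 V$ is the holomorphic isotropy representation.

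\textbf{Step 1: a dictionary.} I would first show that $(X^\circ, \Delta^\circ)$ carries a uniformizing orbi-system of Hodge bundles $(P, \theta)$ corresponding to $G_0 = \SOstar{2n}$ if and only if there is a rank $n$ orbifold vector bundle $\sE$ on $(X^\circ, \Delta^\circ)$ with $\T{(X^\circ, \Delta^\circ)} \isom \bigwedge^2 \sE$. Indeed, given $(P, \theta)$, put $\sE \defn P \x_K V$; the Hodge decomposition above identifies $P \x_K \frp^+$ with $\bigwedge^2 \sE$, and the system being uniformizing says exactly that the Higgs field induces an isomorphism $\T{(X^\circ, \Delta^\circ)} \bij \bigwedge^2 \sE$. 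Conversely, given such an $\sE$, the rank $2n$ orbifold bundle $\sE \oplus \sE^\vee$ carries a tautological non-degenerate symmetric pairing, and its decomposition into the complementary isotropic subbundles $\sE$ and $\sE^\vee$ is a reduction of structure group to $K = \GL n\C \subset \mathrm{SO}(2n, \C)$; taking for $\theta$ the Higgs field determined by the given isomorphism $\T{(X^\circ, \Delta^\circ)} \isom \bigwedge^2 \sE = P \x_K \frp^+$ then produces an orbi-system of Hodge bundles, with $\theta \wedge \theta = 0$ automatic since $\frp^+$ is abelian, and uniformizing by construction. This is the orbifold counterpart of the constructions of~\cite{Patel23}, carried out in the orbifold category exactly as in the proof of \cref{quot siegel}.

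\textbf{Step 2: translating the Chern equality.} Using the decomposition of $\frg$, the identities $\ccorb1{\sE \tensor \sE^\vee} = 0$ and $\ccorb2{\sE \tensor \sE^\vee} = 2n\,\ccorb2\sE - (n - 1)\,\cpcorb12\sE$, and the standard Chern class formulas for $\bigwedge^2 \sE$ and $\bigwedge^2 \sE^\vee$, a Whitney-sum computation gives $\ccorb2{P \x_K \frg} = 2(n - 1)\big( 2\,\ccorb2\sE - \cpcorb12\sE \big)$. On the other hand, feeding $\T{(X^\circ, \Delta^\circ)} \isom \bigwedge^2 \sE$ into the same formulas yields $\ccorb2{X, \Delta} = \tfrac{(n - 1)(n - 2)}{2}\,\cpcorb12\sE + (n - 2)\,\ccorb2\sE$ and $\cpcorb12{X, \Delta} = (n - 1)^2\,\cpcorb12\sE$, where the orbifold Chern classes of $(X, \Delta)$ are computed over the orbifold locus $(X^\circ, \Delta^\circ)$, whose complement in $X$ has codimension at least two. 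Combining, one obtains
\[ 2\,\ccorb2{X, \Delta} - \cpcorb12{X, \Delta} + 2 n\,\ccorb2\sE - (n - 1)\,\cpcorb12\sE \;=\; 2(n - 1)\big( 2\,\ccorb2\sE - \cpcorb12\sE \big) \;=\; \ccorb2{P \x_K \frg}, \]
so that, after intersecting with $[K_X + \Delta]^{d - 2}$, the Chern class equality in the statement is equivalent to the one appearing in \cref{main}\,\lref{main.2} for $G_0 = \SOstar{2n}$.

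\textbf{Step 3: conclusion, and the main obstacle.} If $\sE$ as in the statement exists, Step~1 produces a uniformizing orbi-system of Hodge bundles for $G_0 = \SOstar{2n}$ on $(X^\circ, \Delta^\circ)$, Step~2 converts the given Chern equality into that of \cref{main}\,\lref{main.2}, and \cref{main} then yields $\cX \isom \factor\cD\Gamma$ with $\cD = \factor{G_0}{K_0} \isom \cD_n$; since $\cX$ is a complex orbifold by \cref{main2}, its orbifold universal cover is $\cD_n$. Conversely, if $\wt\cX \isom \cD_n$, then $\cX \isom \factor{\cD_n}\Gamma$ is a complex orbifold, the uniformization endows it with the tautological uniformizing orbi-VHS corresponding to $G_0 = \SOstar{2n}$ (cf.\ \cref{simpson orbifolds}), the implication \lref{main.1}$\,\imp\,$\lref{main.2} of \cref{main} supplies the Chern equality for $\frg$, and Steps~1--2 translate it back into the existence of $\sE$ with $\T{(X^\circ, \Delta^\circ)} \isom \bigwedge^2 \sE$ together with the stated Chern equality. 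I expect the delicate point to be Step~1: one must check that the orbi-system of Hodge bundles built out of $\sE \oplus \sE^\vee$ really has structure group a genuine form of $\SOstar{2n}$, and not merely $\mathrm{SO}(2n, \C)$, and that it is indeed uniformizing; it is also worth recording that for small $n$ the domain $\cD_n$ degenerates through the low-dimensional isomorphisms of $\mathfrak{so}^*(2n)$ (a point for $n = 1$, the disc $\H 1$ for $n = 2$, the ball $\mathbb B^3$ for $n = 3$, a type~BD\,I domain for $n = 4$), with the conclusion remaining consistent in each case.
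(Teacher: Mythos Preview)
Your proposal is correct and follows essentially the same approach as the paper, which omits the proof of \cref{quot D III} and refers back to the proof of \cref{quot siegel} (with $\Sym^2$ replaced by $\bigwedge^2$) and to~\cite[Section~7]{Patel23}. Your ``main obstacle'' in Step~1 is overstated: in the paper's framework (\cref{dfn system of hodge}), a uniformizing orbi-system of Hodge bundles associated to $G_0$ is simply a principal $K$-bundle $P$ together with an isomorphism $\theta \from \T{(X^\circ,\Delta^\circ)} \bij P \x_K \frg^{-1,1}$, so one may take $P$ to be the frame bundle of $\sE$ directly, exactly as in the proof of \cref{quot siegel}, and no real-form compatibility needs to be checked at this stage.
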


\begin{rem}
Recall that the group $\SOstar{2n}$ is defined as
\[ \SOstar{2n} \defn \set{ M \in \SL{2n}\C \;\Big|\; M^T M = I_{2n} \text{ and } \overline M^T J M = J }, \]
where $I_n$ is the $n \x n$ identity matrix and $J = \Mat{ 0 & I_n \\ - I_n & 0 }$.
\end{rem}

\begin{cor}[Quotients of type A~III] \label{quot A III}
Let $\cX = (X, \Delta)$ be a klt pair as in \cref{std}, of dimension $d = pq$, where $p, q \in \N$, $pq \ge 2$ and $p \ne q$.
Then the orbifold universal cover of $\cX$ is the bounded symmetric domain
\[ \cA_{p,q} \defn \factor{\SU{p,q}}{\mathrm S \big( \U p \x \U q \big)} \]
if and only if there are orbifold vector bundles $\sV, \sW$ on $(X^\circ, \Delta^\circ)$ of ranks $p$ and $q$, respectively, such that
\begin{enumerate}
\item\label{AIII.1} $\T{(X^\circ, \Delta^\circ)} \isom \sHom\sV.\sW.$, and
\item\label{AIII.2} we have the Chern class equality
\begin{align*}
\Big[ 2(p + q) \big( \ccorb2\sV + \ccorb2\sW \!\big) & - (p + q - 1) \big( \cpcorb12\sV + \cpcorb12\sW \!\big) \\
& + 2 \ccorb1\sV \ccorb1\sW \Big] \cdot [ K_X + \Delta ]^{d-2} = 0.
\end{align*}
\end{enumerate}
\end{cor}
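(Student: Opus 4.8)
The plan is to deduce the statement directly from \cref{main}, applied with the Hodge group $G_0 = \SU{p,q}$, by making explicit in that case both the notion of a uniformizing orbi-system of Hodge bundles corresponding to $G_0$ and the Chern class equality occurring in \lref{main.2}. I set $K_0 = \mathrm{S}\big(\U p \x \U q\big)$ throughout, so that $\cD = G_0/K_0 = \cA_{p,q}$. The relevant structural facts are that the complexified maximal compact is $K_0^\C = \mathrm{S}\big(\GL{p}{\C} \x \GL{q}{\C}\big)$, that it and $G_0$ sit inside $\SL{p+q}{\C}$ (the group of the standard representation $\C^{p+q}$), and that $\frg \otimes \C = \mathfrak{sl}_{p+q}$ is the trace-free part of $\mathfrak{gl}_{p+q} = \mathscr{E}\!nd(\C^{p+q})$.

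The first step is to translate the datum $(P, \theta)$ into the bundles $\sV, \sW$. A reduction of structure group of $(P, \theta)$ to $K_0^\C$ amounts to a pair of orbifold vector bundles $\sV, \sW$ on $(X^\circ, \Delta^\circ)$ of ranks $p$ and $q$ (together with a trivialization of $\det \sV \otimes \det \sW$); under it the underlying bundle of $(P, \theta)$ is $E = \sV \oplus \sW$, carrying its tautological weight-one Hodge filtration and a polarization of signature $(p, q)$, and the Higgs field is a morphism $\sV \to \sW \otimes \Omegap{(X^\circ, \Delta^\circ)}{1}$, that is, an element of
\[ \sHom\sV.\sW. \otimes \Omegap{(X^\circ, \Delta^\circ)}{1} \;=\; \mathrm{Hom}\big( \T{(X^\circ, \Delta^\circ)}, \, \sHom\sV.\sW. \big). \]
By definition, $(P, \theta)$ is \emph{uniformizing} precisely when this element is an isomorphism; hence \lref{main.2} of \cref{main} forces $\T{(X^\circ, \Delta^\circ)} \isom \sHom\sV.\sW.$, which is condition~\lref{AIII.1}, while conversely, starting from \lref{AIII.1}, one reconstructs $(P, \theta)$ by setting $E = \sV \oplus \sW$ and letting $\theta$ correspond to $\id_{\T{(X^\circ, \Delta^\circ)}}$ under the identification above. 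The trivialization of $\det \sV \otimes \det \sW$ is immaterial: both $\sHom\sV.\sW.$ and the bracketed expression in \lref{AIII.2} are left unchanged when $(\sV, \sW)$ is replaced by $(\sV \otimes \sL, \sW \otimes \sL)$ for any orbifold line bundle $\sL$ — a short Chern class check — so one may normalize $\det \sV \otimes \det \sW$, or pass to the adjoint group, since everything in sight depends only on the associated $\mathfrak{su}(p,q)$-bundle.

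The second step is the Chern class identification. Via the standard representation, the adjoint bundle $P \x_K \frg$ is the trace-free endomorphism bundle $\mathscr{E}\!nd_0(E)$ of $E = \sV \oplus \sW$; since $\mathscr{E}\!nd(E) = \mathscr{E}\!nd_0(E) \oplus \mathscr{O}_{(X^\circ, \Delta^\circ)}$ and orbifold Chern classes satisfy the usual formal identities, one obtains $\ccorb2{P \x_K \frg} = \ccorb2{\mathscr{E}\!nd(E)} = 2(p+q)\,\ccorb2 E - (p+q-1)\,\cpcorb12 E$. Expanding $\ccorb2 E = \ccorb2\sV + \ccorb2\sW + \ccorb1\sV \, \ccorb1\sW$ and $\cpcorb12 E = \cpcorb12\sV + \cpcorb12\sW + 2\,\ccorb1\sV \, \ccorb1\sW$ and collecting terms yields
\begin{align*}
\ccorb2{P \x_K \frg} = 2(p+q)\big( \ccorb2\sV + \ccorb2\sW \big) &- (p+q-1)\big( \cpcorb12\sV + \cpcorb12\sW \big) \\
&+ 2\,\ccorb1\sV \, \ccorb1\sW,
\end{align*}
which is exactly the bracketed expression in \lref{AIII.2}. (In passing, this shows the quantity does not depend on twisting $E$ by a line bundle, as $\mathscr{E}\!nd_0(E)$ only sees the projectivization.) Intersecting with $[K_X + \Delta]^{d-2}$, the Chern equality of \lref{main.2} for $G_0 = \SU{p,q}$ is therefore literally condition~\lref{AIII.2}.

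Combining the two steps: for $G_0 = \SU{p,q}$, condition~\lref{main.1} of \cref{main} — with $\cD = \cA_{p,q}$, the deck group being a cocompact lattice automatically, since $X$ is projective and $K_X + \Delta$ is ample — asserts exactly that the orbifold universal cover of $\cX$ is $\cA_{p,q}$, and condition~\lref{main.2} is the conjunction of \lref{AIII.1} and \lref{AIII.2}; so the corollary follows. The step I expect to require the most care is the first one: recalling the exact definition of a uniformizing orbi-system of Hodge bundles from \cref{sec hodge groups}, checking that the Higgs field produced from \lref{AIII.1} satisfies it, and making sure that $\SU{p,q}$ — rather than another group with symmetric space $\cA_{p,q}$ — is the Hodge group one must use; this is where the hypotheses $pq \ge 2$ and $p \ne q$ enter, ruling out the degenerate one-dimensional case and the low-dimensional exceptional isomorphisms of Hermitian symmetric spaces, and making the two summands $\sV$ and $\sW$ unambiguous through their ranks. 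The remaining Chern class bookkeeping is routine.
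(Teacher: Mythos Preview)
Your proposal is correct and follows exactly the approach the paper indicates (the proof is omitted there but said to proceed as for \cref{quot siegel} and as in~\cite[Section~8]{Patel23}): translate a principal $K$-bundle for $K = \mathrm S\big(\GL p\C \x \GL q\C\big)$ into the pair $(\sV, \sW)$, identify $\frg^{-1,1}$ with $\Hom\C^p.\C^q.$ and $P \x_K \frg$ with the trace-free endomorphism bundle of $\sV \oplus \sW$, carry out the Chern class computation, and apply \cref{main} in both directions. One small correction to your commentary: the hypothesis $p \ne q$ is not about exceptional isomorphisms of symmetric spaces but about $\Aut(\cA_{p,p})$ being disconnected (see the remark immediately following the corollary), which obstructs only the ``only if'' direction.
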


\begin{rem}
If $p = q \ge 2$, then the ``if'' direction of \cref{quot A III} is still valid.
However, the ``only if'' direction only holds up to an orbi-\'etale double cover $X' \to X$.
This is because the automorphism group $\cA_{p,p}$ has two connected components.
The argument is very similar to the one in the proof of \cref{Hn ample converse}.
\end{rem}

\begin{rem}
In the case $p = 1$, the domain $\cA_{1,q}$ is the unit ball $\mathbb B^q \subset \C^q$.
The expression~\lref{AIII.2} simplifies to the Miyaoka--Yau equality and we recover the characterization of ball quotients from~\cite[Theorem~A]{MYeq}.
\end{rem}

\begin{cor}[Quotients of type BD~I] \label{quot BD I}
Let $\cX = (X, \Delta)$ be a klt pair as in \cref{std}, of dimension $d = n$, where $n \ge 3$.
Then the orbifold universal cover of $\cX$ is the bounded symmetric domain
\[ \cB_n \defn \factor{\SOnull{2,n}}{\SO 2 \x \SO n} \]
if and only if there is an orthogonal rank $n$ orbifold vector bundle $\sW$ and an orbifold line bundle $\sL$ on $(X^\circ, \Delta^\circ)$ such that
\begin{enumerate}
\item $\T{(X^\circ, \Delta^\circ)} \isom \sHom\sW.\sL.$, and
\item we have the Chern class equality
\[ \Big[ \ccorb2{\textstyle\bigwedge^2 \sW} + 2 \ccorb2\sW - n \, \cpcorb12\sL \Big] \cdot [ K_X + \Delta ]^{d-2} = 0. \]
\end{enumerate}
\end{cor}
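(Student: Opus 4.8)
The plan is to deduce this from the main uniformization theorem \cref{main} together with the representation theory of the Hodge group $G_0 = \SOnull{2,n}$, exactly paralleling the proofs of \cref{quot siegel,quot D III,quot A III}. The bounded symmetric domain $\cB_n = \factor{\SOnull{2,n}}{\SO 2 \x \SO n}$ has isotropy group $K_0 = \SO 2 \x \SO n$, whose maximal compact subgroup determines, via the standard representation of $G_0$ on $\R^{2+n}$ (with its signature $(2,n)$ form), a Hodge structure of weight one — more precisely, the holomorphic tangent space at the base point is the $K_0$-module $\C^2 \tensor \C^n$ (a $(2,0)$-type piece) whose underlying complexification of $\frg / \frk$ splits accordingly. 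A uniformizing orbi-system of Hodge bundles $(P,\theta)$ corresponding to $G_0$ therefore produces, by associating the standard representation, an orbifold Hodge bundle of rank $2+n$ which decomposes as $\sL \oplus (\sL \tensor \sW) \oplus \wb\sL$ for an orbifold line bundle $\sL$ with $c_1(\sL)$ the Hodge class of the $(1,0)$-part, and an orthogonal rank $n$ bundle $\sW$; the Higgs field $\theta$ then gives precisely the isomorphism $\T{(X^\circ,\Delta^\circ)} \isom \sHom\sW.\sL.$ asserted in~(1).

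The two directions then go as follows. For the ``only if'' direction, assume $\wt\cX \isom \cB_n$. Then by \cref{main} (direction \lref{main.1} $\Rightarrow$ \lref{main.2}) there is a uniformizing orbi-system of Hodge bundles $(P,\theta)$ for $G_0 = \SOnull{2,n}$ with $\ccorb2{P \x_K \frg} \cdot [K_X+\Delta]^{n-2} = 0$. One extracts $\sW$ and $\sL$ from the standard representation as above; the isomorphism in~(1) comes from the Higgs field, and the Chern class equality in~(2) is obtained by expanding the class $\ccorb2{P \x_K \frg}$ in terms of $\ccorb i\sW$ and $\ccorb1\sL$ using the decomposition $\frg = \frso(2,n)$ into $K_0$-isotypic pieces — here $\frg \isom \bigwedge^2(\C^2 \oplus \C^n)$ as an $\SO2 \x \SO n$-module, which breaks up as (a trivial summand from $\bigwedge^2\C^2$) $\oplus$ ($\C^2 \tensor \C^n$, i.e.\ $\sHom\sW.\sL. \oplus \sHom{\sL}.\sW.$) $\oplus$ ($\bigwedge^2\C^n$, i.e.\ $\bigwedge^2\sW$). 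Using the orthogonality of $\sW$ (so $\ccorb1\sW = 0$ and $\sW \isom \sW\dual$) the second Chern class of the adjoint bundle collapses to the combination $\ccorb2{\bigwedge^2\sW} + 2\,\ccorb2\sW - n\,\cpcorb12\sL$; this is the routine Chern-class bookkeeping and is not the main difficulty. Conversely, given $\sW$, $\sL$ satisfying~(1) and~(2), one builds a system of Hodge bundles $(P,\theta)$ with structure group $G_0 = \SOnull{2,n}$ by taking the orbifold bundle $\sL \oplus (\sL\tensor\sW) \oplus \sL\dual$ with its evident orthogonal structure and the Higgs field supplied by the isomorphism in~(1); one checks this is a \emph{uniformizing} orbi-system of Hodge bundles (stability and the correct numerical normalization of the Higgs field, which is where condition~(1) identifying $\theta$ with an isomorphism onto $\T{}$ is used), and the hypothesis~(2) translates back into $\ccorb2{P \x_K \frg}\cdot[K_X+\Delta]^{n-2} = 0$, so \cref{main} applies and gives $\wt\cX \isom \factor{G_0}{K_0} = \cB_n$.

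The main obstacle is the verification that the bundle-theoretic data $(\sW, \sL)$ with the orthogonality assumption genuinely corresponds to a \emph{system of Hodge bundles for the specific real form} $\SOnull{2,n}$ (and not some other real form of $\SO(2+n,\C)$, nor a non-identity-component issue): one must pin down the real structure and the Hodge filtration so that the associated Higgs bundle has holonomy in $\SOnull{2,n}$ rather than, say, $\SO(2,n)$ with the wrong component group, and confirm that the weight-one Hodge structure attached to the standard representation is the one whose period domain is $\cB_n$. Unlike types C~I and D~III, here the group has a disconnected maximal compact and $d = n \ge 3$ is needed to ensure $\cB_n$ is irreducible and not accidentally isomorphic to a lower-dimensional domain (for instance $\cB_1 \isom \H1$, $\cB_2 \isom \H1 \x \H1$, $\cB_3 \isom \cH_2$, $\cB_4 \isom \cA_{2,2}$), so some care with small-rank exceptional isomorphisms is required to state the corollary cleanly; these coincidences are precisely why the hypothesis $n \ge 3$ appears. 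Once the real form and component group are correctly identified, the remainder is the representation-theoretic expansion of $\ccorb2{P\x_K\frg}$ and an appeal to \cref{main}.
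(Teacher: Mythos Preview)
Your overall strategy is correct and matches the paper's: the authors omit the proof of \cref{quot BD I} entirely, stating only that it parallels the proof of \cref{quot siegel} (and referring to~\cite[Section~9]{Patel23}). Your plan---build a principal $K$-bundle $P$ with $K = \C^* \x \SO(n,\C)$ from the data $(\sW, \sL)$, identify $\theta$ via condition~(1), expand $\ccorb2{P \x_K \frg}$ using the decomposition $\frg \isom \bigwedge^2(\C^2 \oplus \C^n)$ as a $K$-module, and apply \cref{main}---is exactly this.

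Two corrections, however. First, your description of the Hodge bundle attached to the \emph{standard} representation is wrong: it should be $\sL \oplus \sW \oplus \sL\dual$ (weight-two ``K3-type'' VHS of rank $n+2$), not $\sL \oplus (\sL \tensor \sW) \oplus \sL\dual$. The piece $\sL \tensor \sW$ is $P \x_K \frg^{-1,1}$, a summand of the \emph{adjoint} bundle, not the middle graded piece of the standard one. Fortunately this detour through the standard representation is unnecessary---the paper's model argument for \cref{quot siegel} works directly with the frame bundle and the adjoint decomposition, which you do handle correctly. Second, your worry about pinning down the real form and component group is overblown: by \cref{dfn system of hodge}, a uniformizing orbi-system of Hodge bundles for $G_0$ consists only of a $K$-bundle $P$ and an isomorphism $\theta$; the real form $G_0 = \SOnull{2,n}$ is fixed \emph{a priori}, not recovered from the bundle, so there is nothing to check. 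The hypothesis $n \ge 3$ is present only to avoid the overlap with \cref{Hn ample} at $n = 2$ (cf.~the remark following \cref{quot BD I}), not because of any structural obstacle in the argument.
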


\begin{rem}
The group $\SOnull{2,n}$ is the connected component of the identity of the group $\SO{2,n}$.
A vector bundle $\sW$ is called \emph{orthogonal} if it admits a reduction of structure group to the orthogonal group $\mathrm O(n, \C)$.
Equivalently, there is a symmetric isomorphism $\sW \bij \sW \dual$.
In particular, any orthogonal bundle is self-dual.
\end{rem}

\begin{rem}
In the case $n = 2$, the domain $\cB_2$ is the bidisc $\H 2$, which has already been covered in \cref{Hn ample}.
\end{rem}

\subsection*{Acknowledgements}

We would like to thank the Mathematisches Forschungs\-institut Ober\-wolfach, where this collaboration was started.
We also thank Henri Guenancia for many helpful discussions.

\section{Preliminaries on orbifolds}

We borrow terminology and notation from the article~\cite{MYeq}.
While we refer the reader to~\cite[Section~2]{MYeq} for definitions of objects and structures over orbifolds, we reproduce the most relevant ones in this section.

\subsection{Orbi-structures and developability}

In what follows, let $(X, \Delta)$ be a klt pair with standard coefficients, i.e.~$\Delta = \sum_{i \in I} \big( 1 - \frac1{m_i} \big) \Delta_i$ with integers $m_i \ge 2$ for each $i \in I$.
Set $X^* \defn \reg X \setminus \supp \Delta$.
As one would expect, objects such as sheaves and K\"ahler metrics on a pair $(X, \Delta)$ are defined on local charts.
A collection of local charts is called an \emph{orbi-structure}.

\begin{dfn}[Adapted morphisms, {\cite[Definition~2.1]{MYeq}}]
Let $(X, \Delta)$ be as above, and let $Y$ be a normal variety together with a finite Galois morphism $f \from Y \to X$.
We say that $f$ is:
\begin{itemize}
\item \emph{adapted} to $(X, \Delta)$ if for each $i \in I$, there is an $a_i \in \Z_{\ge 1}$ and a reduced divisor $\Delta'_i$ on $Y$ such that $f^* \Delta_i = a_i m_i \Delta'_i$,
\item \emph{strictly adapted} if $f$ is adapted and $a_i = 1$ for all $i \in I$,
\item \emph{orbi-\'etale} if $f$ is strictly adapted and \'etale over $X^*$.
\end{itemize}
\end{dfn}

If $X$ is compact, the map $f$ is orbi-\'etale if and only if $K_Y = f^*( K_X + \Delta )$ holds.

\begin{dfn}[Orbi-structures, {\cite[Definition~2.2]{MYeq}}] \label{orbis}
An \emph{orbi-structure} on $(X, \Delta)$ is a covering $\set{ U_\alpha }_{\alpha \in J}$ of $X$ by \'etale-open subsets, together with morphisms $f_\alpha \from X_\alpha \to U_\alpha$ adapted to $(U_\alpha, \Delta_\alpha)$, where $X_\alpha$ is a normal variety, for each $\alpha \in J$.
Moreover, we require the following compatibility condition: for all $\alpha, \beta \in J$, the projection map $g_{\alpha\beta} \from X_{\alpha\beta} \to X_\alpha$ is \qe, where $X_{\alpha\beta}$ denotes the normalization of $X_\alpha \x_X X_\beta$.

An orbi-structure is said to be \emph{strict} (resp.~\emph{orbi-\'etale}) if the maps $f_\alpha$ are all strictly adapted (resp.~orbi-\'etale).
It is said to be \emph{smooth} if each $X_\alpha$ is smooth, in which case the maps $g_{\alpha\beta}$ are in fact \'etale.
\end{dfn}

It follows from \cref{orbis} that two given orbi-structures $\set{ X_\alpha, f_\alpha, U_\alpha }_{\alpha \in I}$ and $\set{ Y_\beta, g_\beta, V_\beta }_{\beta \in J}$ over $(X, \Delta)$ are compatible if and only if the maps $h_{\alpha\beta} \from W_{\alpha\beta}\to X_\alpha$ and $h_{\beta\alpha} \from W_{\alpha\beta}\to Y_\beta$ are quasi-\'etale.
Here $W_{\alpha\beta}$ denotes the normalization of $X_\alpha \x_X Y_\beta$.

\begin{dfn}[Orbifold fundamental group, {\cite[Definition~2.14]{MYeq}}]
The topological fundamental group $\pi_1(X, \Delta)$ of $(X, \Delta)$ is defined as
\begin{align*}
\pi_1(X, \Delta) \defn \factor{\pi_1(X^*)}{\la\!\la \gamma_i^{m_i}, \, i \in I \, \ra\!\ra}
\end{align*}
where $\gamma_i$ denotes a loop around $\Delta_i$ for all $i \in I$, and $m_i \in \N$.
\end{dfn}

By~\cite[Theorem~2.16]{MYeq}, there is a one-to-one correspondence between subgroups of $\pi_1(X, \Delta)$ and covers $Y \to X$ branched at most at $\Delta$, under which the trivial subgroup corresponds to the \emph{(orbifold) universal cover} $(\wt X, \wt \Delta)$ of $(X, \Delta)$.

\begin{dfn}[Developability, {\cite[Definition~2.18]{MYeq}}]
A klt pair $(X, \Delta)$ is said to be \emph{developable} if its universal cover $(\wt X, \wt \Delta)$ is a manifold, i.e.~if $\wt X$ is smooth and $\wt\Delta = \emptyset$.
\end{dfn}

\begin{dfn}[Orbifolds] \label{dfn orbifolds}
An \emph{orbifold} is a klt pair $(X, \Delta)$ with standard coefficients that admits a smooth orbi-\'etale orbi-structure.
More generally, the largest open subset of $(X, \Delta)$ that admits a smooth orbi-\'etale orbi-structure is called the \emph{orbifold locus} of $(X, \Delta)$.
We often denote it by $X^\circ$, or more precisely by $(X^\circ, \Delta^\circ)$, where $\Delta^\circ \defn \Delta\big|_{X^\circ}$.
\end{dfn}

\subsection{Orbifolds as Deligne--Mumford stacks}

It is unfortunate that there is no agreed upon definition of an orbifold, and different authors use the term differently.
The definition we use coincides with~\cite[Definition~4.1.1]{BG08}.

It is particularly useful to view an orbifold $(X, \Delta)$ as a smooth DM (= Deligne--Mumford) stack $\cX$ whose generic stabilizer is trivial.
In other words, $\cX$ has an open dense subset isomorphic to an algebraic or analytic space.
A clear advantage of this is that one can talk about morphisms and sheaves on orbifolds without using local charts, as these notions are well-defined for algebraic and analytic stacks.
In order to substitute the word ``orbifold'' with ``smooth DM stack'', it is important to formalize this change in perspective.
This follows essentially from the following classical result of Satake.

\begin{prp}[{\cite[Proposition~4.2.17]{BG08}}]
Every analytic orbifold $\cX \defn (X, \Delta)$ can be presented as the quotient space of a locally free action (i.e.~with finite stabilizers) of a compact Lie group on an analytic space $M$. \qed
\end{prp}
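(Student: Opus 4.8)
The plan is to run the classical frame-bundle construction of Satake, adapted to the analytic category. Throughout, view $\cX$ as a smooth DM stack of dimension $n$ with trivial generic stabilizer, and fix a smooth orbi-\'etale orbi-structure $\{X_\alpha, f_\alpha, U_\alpha\}_{\alpha \in J}$ as in \cref{orbis}, so that each $X_\alpha$ is smooth and $G_\alpha \defn \Gal(X_\alpha / U_\alpha)$ acts faithfully on $X_\alpha$ with $U_\alpha = X_\alpha/G_\alpha$. The first step is to equip the orbifold tangent bundle $\T\cX$ with a Hermitian metric $h$: on each chart one averages an arbitrary Hermitian metric on $X_\alpha$ over $G_\alpha$ to obtain a $G_\alpha$-invariant metric $h_\alpha$, and then patches the $h_\alpha$ by a partition of unity on the paracompact space $X$ subordinate to the cover $\{U_\alpha\}$, using that the transition maps $g_{\alpha\beta}$ are \'etale so that the $h_\alpha$ are comparable on overlaps.

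Next I would define $M$ to be the total space of the bundle of $h$-unitary frames of $\T\cX$; a priori this is a DM stack carrying a right $\U n$-action, and the crucial claim is that $M$ is in fact an honest analytic space. This is a local statement: over a chart, $M$ is the quotient by $G_\alpha$ of the unitary frame bundle $\mathrm{Fr}(X_\alpha)$ of the smooth space $X_\alpha$ (equipped with the pulled-back metric), and I claim the induced $G_\alpha$-action on $\mathrm{Fr}(X_\alpha)$ is free. Indeed, if $g \in G_\alpha$ fixes a frame over a point $p$, then $g(p) = p$ and $\d g_p = \id$ on $T_p X_\alpha$; by the classical linearization of a finite group action near a fixed point, $g$ agrees with its differential $\d g_p = \id$ in a neighbourhood of $p$, hence $g = \id$ everywhere by the identity theorem together with faithfulness of the action. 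Consequently $\mathrm{Fr}(X_\alpha)/G_\alpha$ is a complex manifold, and these patch to a complex manifold $M$ — in particular an analytic space — on which $\U n$ acts via the residual frame action.

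It then remains to verify that the $\U n$-action on $M$ is locally free and recovers $\cX$. For local freeness, over a chart the stabilizer of the class $[\varphi]$ of a unitary frame $\varphi$ consists of those $A \in \U n$ with $\varphi \cdot A = g \cdot \varphi$ for some $g \in G_\alpha$; since $\U n$ acts freely on frames, there is at most one such $A$ for each $g$, so the stabilizer embeds into $G_\alpha$ and is therefore finite. Finally, $\mathrm{Fr}(X_\alpha) \to X_\alpha$ is a genuine principal $\U n$-bundle, and the commuting $G_\alpha$- and $\U n$-actions yield $[M/\U n] \isom [X_\alpha/G_\alpha]$ over each chart, hence $[M/\U n] \isom \cX$ globally; passing to coarse spaces identifies the quotient space $M/\U n$ with $X$ endowed with its orbifold structure. (If $\cX$ is compact, then so is $M$.)

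The step I expect to demand the most care is the freeness of the local actions on the frame bundles. It rests on two points: that the orbi-charts can be taken with $G_\alpha$ acting faithfully on the \emph{smooth} space $X_\alpha$ — which is exactly the content of $\cX$ being an orbifold, i.e.~admitting a smooth orbi-\'etale orbi-structure — and on the analytic linearization of a finite group fixing a point. The gluing of the local frame bundles and of the chosen metrics across the maps $g_{\alpha\beta}$ is routine, but should be carried out using the compatibility condition of \cref{orbis}.
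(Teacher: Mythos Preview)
The paper does not actually prove this proposition; it is stated with a \qed\ and attributed to \cite[Proposition~4.2.17]{BG08}. Your frame-bundle construction is precisely the classical Satake argument that underlies that reference, so the overall strategy is the right one, and the logic of the freeness step (via Cartan's linearization) and of the local-freeness step (stabilizer embeds into $G_\alpha$) is correct.

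There is, however, one misstatement worth flagging. The bundle of $h$-unitary frames of the holomorphic tangent bundle is a principal $\U n$-bundle in the \emph{smooth} (or real-analytic) category; it carries no natural complex structure, since $\U n$ is not a complex Lie group and the metric $h$ is not holomorphic data. So the claim that ``$\mathrm{Fr}(X_\alpha)/G_\alpha$ is a complex manifold'' and that $M$ is ``in particular an analytic space'' (in the complex-analytic sense) is not correct as written. What your construction actually yields is a smooth manifold $M$ with a locally free $\U n$-action and $[M/\U n] \isom \cX$ as smooth orbifolds, which is exactly what Boyer--Galicki prove and is enough for the paper's purpose of exhibiting $\cX$ as a global quotient stack. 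If you want $M$ to be genuinely complex-analytic, you should instead take the full holomorphic frame bundle, a principal $\GL n\C$-bundle on which $G_\alpha$ again acts freely by the same differential argument; this matches the algebraic statement cited from~\cite{Kresch09}, at the cost of the acting group no longer being compact.
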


\noindent
The algebraic analog of this statement is also true, replacing ``analytic'' by ``algebraic'' and ``compact Lie group'' by ``reductive linear algebraic group''~\cite[Thm.~4.4]{Kresch09}.

The takeaway is that every analytic (resp.~algebraic) orbifold $\cX$ can be written as $[M / G]$, where $M$ is an analytic (resp.~algebraic) space and $G$ is a compact Lie (resp.~reductive linear algebraic) group.
In other words, orbifolds in the sense of~\cite[Definition~4.1.1]{BG08}) are global quotient stacks.

We refer the reader to~\cite[Definition~2.17]{PS23} for the definition of analytic stacks.
A quotient stack $[M / G]$ naturally admits the structure of an analytic stack (see~\cite[Example~2.18]{PS23}).
We can thus view an analytic (resp.~algebraic) orbifold as an analytic (resp.~algebraic) stack that admits an open covering by substacks of the form $[U / \Gamma]$, where $U$ is an analytic (resp.~algebraic) space and $\Gamma$ is a finite group acting on $U$ by automorphisms.
More formally, we have:

\begin{dfn}[{\cite[Definition~3.10]{PS23}}]
An \emph{analytic orbifold} is an analytic stack $\cX$ together with a family of substacks $\big( f_i \from [U_i / \Gamma_i] \to \cX \big)_{i \in I}$ such that:
\begin{itemize}
\item For each $i \in I$, $U_i$ is a complex manifold, and $\Gamma_i$ is a finite group acting on $U_i$.
\item For each $i \in I$, $f_i \from [U_i / \Gamma_i] \to \cX$ is an open substack.
\item The morphism $\bigsqcup_{i \in I} [U_i / \Gamma_i ] \to \cX$ is a surjective morphism of stacks.
\end{itemize}
\end{dfn}

For the definitions of open substacks and surjective morphisms of stacks, we refer the reader to~\cite[Definitions~3.3 and~2.16]{PS23}.
 
\begin{dfn}
Let $\cX$ be an analytic (resp.~algebraic) stack.
A \emph{coarse moduli space} for $\cX$ is an analytic (resp.~algebraic) space $X$ together with a morphism $m \from \cX \to X$ of stacks such that for all morphisms of stacks $\phi \from \cX \to Y$, where $Y$ is an analytic (resp.~algebraic) space, there exists a unique morphism $\psi \from X \to Y$ such that $\phi = \psi \circ m$.
\end{dfn}

Coarse moduli spaces do not always exist.
However, if the stack $\cX$ is an orbifold $(X, \Delta)$, then $\cX$ admits a coarse moduli space, and it is the expected one.

\begin{lem}
The coarse moduli space of an orbifold $(X, \Delta)$ is $X$.
\end{lem}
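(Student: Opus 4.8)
The plan is to build the coarse moduli space from the orbifold charts and then descend. By \cref{dfn orbifolds}, the orbifold $(X,\Delta)$ admits a smooth orbi-\'etale orbi-structure $\set{X_\alpha, f_\alpha, U_\alpha}_{\alpha\in J}$; setting $G_\alpha \defn \Gal(X_\alpha/U_\alpha)$, the restriction $\cX|_{U_\alpha}$ is the quotient stack $[X_\alpha/G_\alpha]$, and because $f_\alpha$ is Galois one has $X_\alpha/G_\alpha \isom U_\alpha$ as analytic (resp.~algebraic) spaces. First I would establish the local statement: for a finite group $G$ acting on a complex manifold (resp.~smooth variety) $U$, the quotient $U/G$ --- which exists as an analytic space by Cartan's theorem, resp.~as a variety --- is the coarse moduli space of $[U/G]$, with structure morphism the natural map $m_U \from [U/G] \to U/G$. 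This is classical; the relevant facts are that $U \to U/G$ is a uniform categorical quotient, that $m_U$ is a universal homeomorphism on underlying topological spaces, and that every morphism $[U/G] \to Y$ into a space corresponds to a $G$-invariant morphism $U \to Y$, which therefore factors uniquely through $U/G$.

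Next I would glue. By the local case each $\cX|_{U_\alpha}$ has coarse moduli space $U_\alpha$. Since we work in characteristic zero, DM stacks are tame and formation of the coarse moduli space commutes with the \'etale (resp.~open) base changes $U_\alpha \to X$ and $U_\alpha \x_X U_\beta \to X$; as the $U_\alpha \to X$ constitute an \'etale (resp.~open) cover of $X$, \'etale descent assembles the local structure morphisms into a single morphism $m \from \cX \to X$. To verify the universal property, given $\phi \from \cX \to Y$ with $Y$ a space, restriction to $\cX|_{U_\alpha}$ produces via the local case a unique $\psi_\alpha \from U_\alpha \to Y$ with $\phi|_{U_\alpha} = \psi_\alpha \circ m_{U_\alpha}$; on the overlaps $U_\alpha \x_X U_\beta$ the morphisms $\psi_\alpha$ and $\psi_\beta$ agree by the uniqueness clause, so they glue to $\psi \from X \to Y$ with $\phi = \psi \circ m$, and uniqueness of $\psi$ follows in the same fashion, since each $m_{U_\alpha}$ is an epimorphism of stacks.

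The main obstacle is not conceptual but organizational: one has to control the descent carefully, in particular to know that coarse moduli spaces exist for the local charts and that their formation is compatible with the \'etale covering and with the fibre products that occur. In the algebraic category, both points can be subsumed into a single appeal to the Keel--Mori theorem, which directly furnishes a coarse moduli space $\cX \to X_{\mathrm{crs}}$ whose formation is \'etale-local on the target; comparing this with the charts then identifies $X_{\mathrm{crs}}$ with $X$, and an analogous argument works in the analytic category.
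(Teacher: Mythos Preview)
Your proposal is correct and follows essentially the same approach as the paper: reduce to the local quotient stacks $[X_\alpha/G_\alpha]$, identify their coarse moduli with $U_\alpha$, and then assemble. The paper's proof is terser---it cites \cite[Corollary~4.3]{PS23} for the local statement and then simply asserts that $X = \bigcup_\alpha U_\alpha$ is the coarse moduli space---whereas you spell out the descent and the verification of the universal property on overlaps; your added remark on Keel--Mori is not in the paper but is a reasonable alternative packaging of the same idea.
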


\begin{proof}
Consider a smooth orbi-\'etale orbi-structure $\cC = \set{ X_\alpha, f_\alpha, U_\alpha }_{\alpha \in I}$ on the orbifold $(X, \Delta)$.
We have $U_\alpha \isom \factor{X_\alpha}{G_\alpha}$, where $G_\alpha$ is a finite group acting on $X_\alpha$ by automorphisms, for each $\alpha \in I$.
Each orbit space $U_\alpha$ is a geometric quotient, in particular a categorical quotient. 

By~\cite[Corollary~4.3]{PS23}, we know that $U_\alpha$ is the coarse moduli space of the quotient stack $[X_\alpha / G_\alpha]$.
It follows that $X = \bigcup_{\alpha \in I} U_\alpha$ is the coarse moduli space of~$(X, \Delta)$.
\end{proof}

\begin{rem}
A morphism $f \from [X / G] \to Y$ of analytic (resp.~algebraic) stacks, where $Y$ is an analytic (resp.~algebraic) space, is equivalent to a $G$-invariant morphism $g \from X \to Y$ of analytic (resp.~algebraic) spaces (\cite[Proposition~4.2]{PS23}).
The morphism $g$ factors through the geometric quotient $\factor X G$ (when it exists), i.e.~there exists a unique morphism $h \from \factor X G \to Y$ such that $g = h \circ p$, where $p \from X \to \factor X G$ is the quotient map.

In particular, if $\cX$ is an orbifold, a map $f \from \cX \to Y$ can be expressed as a collection of $G_\alpha$-invariant maps $f_\alpha \from X_\alpha \to Y$ that factor through $U_\alpha \isom \factor{X_\alpha}{G_\alpha}$, where as above $\cC = \set{ X_\alpha, f_\alpha, U_\alpha }_{\alpha \in I}$ is a smooth orbi-\'etale orbi-structure on $\cX$ and $G_\alpha$ are the local Galois groups.
\end{rem}

We conclude this discussion by observing that developability for an orbifold $\cX = (X, \Delta)$ is equivalent to the universal covering stack $\wt\cX$ of $\cX$ being a complex manifold.

\subsection{Principal bundles on orbifolds}

Let $G$ be a (real or complex) Lie group.
In the context of uniformization, it is necessary to have a notion of principal $G$-bundles on klt pairs~$(X, \Delta)$ with standard coefficients.

\begin{dfn}[Orbi-principal bundles]
A \emph{orbi-principal $G$-bundle} $P$ on an orbi-structure $\cC = \set{ U_\alpha, f_\alpha, X_\alpha }_{\alpha \in J}$ on the klt pair $(X, \Delta)$ with standard coefficients is the datum of a collection $\set{ P_\alpha }_{\alpha \in J}$ of principal $G$-bundles on each $X_\alpha$, together with isomorphisms $g_{\alpha\beta}^* P_\alpha \isom g_{\beta\alpha}^* P_\beta$ of principal $G$-bundles satisfying the natural cocycle condition on triple overlaps.
\end{dfn}

\begin{dfn}[Associated orbifold vector bundle]
Let $P$ be as above, and let $V$ be a finite-dimensional representation of $G$.
The \emph{associated orbifold vector bundle} $P \x_G V$ is given by
\[ P_\alpha \x_G V \defn \factor{P_\alpha \x V}{\sim} \]
on each $X_\alpha$, where $(p, v) \sim (pg, g\inv v)$ for all $g \in G$.
The rank of this bundle is equal to $\dim V$.
\end{dfn}

We have the following orbifold analog of a classical fact about flat bundles on simply connected manifolds.

\begin{lem} \label{triv}
Let $\wt\cX$ be a simply connected orbifold, i.e.~$\pi_1(\wt\cX) = \set 1$.
Then any principal $G$-bundle $P$ on $\wt\cX$ which admits a flat connection is trivial, i.e.~we have $P \isom \wt\cX \x G$.
\end{lem}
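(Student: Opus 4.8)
The plan is to transplant the classical monodromy argument to the orbifold setting. A flat connection turns $P$ into a $G$-local system on $\wt\cX$, hence produces a holonomy homomorphism $\rho \from \pi_1(\wt\cX) \to G$; since $\pi_1(\wt\cX) = \set 1$ the homomorphism $\rho$ is trivial; and a flat principal bundle with trivial holonomy admits a global flat section, which yields the desired isomorphism with $\wt\cX \x G$.

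First I would make the holonomy homomorphism precise on charts. Pick a smooth orbi-\'etale orbi-structure $\cC = \set{ X_\alpha, f_\alpha, U_\alpha }_\alpha$ on $\wt\cX$, and after refining the cover assume that each $X_\alpha$ is simply connected. By definition the flat connection on $P$ amounts to a $G_\alpha$-invariant flat connection on each principal $G$-bundle $P_\alpha$ over $X_\alpha$, compatible with the gluing isomorphisms $g_{\alpha\beta}^* P_\alpha \isom g_{\beta\alpha}^* P_\beta$. Parallel transport trivializes each $P_\alpha$ flatly, and assembling the resulting transition functions together with the finite group actions produces, over the manifold $\wt X^* = \reg{\wt X} \setminus \supp \wt\Delta$, a flat $G$-bundle whose monodromy kills the local loops $\gamma_i^{m_i}$ around $\wt\Delta$; it therefore descends to a homomorphism $\rho \from \pi_1(\wt\cX) \to G$, well defined up to conjugation. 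This is nothing but the orbifold counterpart of the classical correspondence between flat $G$-bundles and representations of the fundamental group alluded to in the statement.

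Since $\pi_1(\wt\cX) = \set 1$, the homomorphism $\rho$ is trivial, and it remains to conclude that $P$ itself is trivial. Fix a base point $\wt x$ and a point $p_0$ in the fibre of $P$ over $\wt x$, and define a section $s$ of $P$ by parallel transporting $p_0$ along paths. Path-independence follows from the vanishing of the holonomy together with the connectedness of $\wt\cX$, so $s$ is well defined over $\wt X^*$; along the remaining non-free locus one notes that the finite isotropy group at a point acts trivially on the corresponding fibre, since its image in $\pi_1(\wt\cX)$ is trivial and the isotropy representation is obtained from $\rho$ by composition with the map to $\pi_1(\wt\cX)$, so the local flat trivializations extend across $\wt\Delta$ and $s$ is a genuine global section of the orbi-bundle $P$. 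Then $(y, g) \mapsto s(y) \cdot g$ defines an isomorphism $\wt\cX \x G \bij P$. (Equivalently, since $\pi_1(\wt\cX) = \set 1$ the orbifold universal cover of $\wt\cX$ is $\wt\cX$ itself, and $P$ is recovered as the associated bundle $\wt\cX \x_{\pi_1(\wt\cX)} G$ built from $\rho$, so that $P \isom \wt\cX \x G$ at once.)

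The geometric content is entirely classical, being the statement for manifolds applied on the free locus $\wt X^*$. I expect the only genuine obstacle to be the bookkeeping along the ramification locus: one has to check that compatibility of the flat connection with the orbifold charts forces both the local monodromies around $\wt\Delta$ and the isotropy representations along $\sing{\wt X}$ to vanish once $\pi_1(\wt\cX)$ is trivial, so that the flat trivialization available on $\wt X^*$ genuinely extends over $\wt\Delta$ and the singular locus to all of the orbifold $\wt\cX$.
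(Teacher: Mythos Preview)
Your argument is correct and is essentially the same approach as the paper's: both invoke the orbifold Riemann--Hilbert correspondence, i.e.\ the flat connection produces a monodromy representation $\pi_1(\wt\cX) \to G$, which is trivial by hypothesis, and then one inverts the correspondence to conclude $P \isom \wt\cX \x G$. The paper's proof is a two-line citation of this correspondence, whereas you have unpacked the mechanism on charts and along the ramification locus; your added detail about why the isotropy representations vanish is precisely the content hidden in the paper's phrase ``inverting this construction''.
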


\begin{proof}
This follows from the correspondence between representations of the orbifold (topological) fundamental group into $G$ and flat principal $G$-bundles.
Just as in the case of manifolds, the flat connection on $P$ gives a monodromy representation $\pi_1(\wt\cX) \to G$, which is trivial because $\pi_1(\wt\cX) = \set 1$.
Inverting this construction gives $P \isom \wt\cX \x G$.
\end{proof}

For the definition of systems of Hodge bundles over a complex manifold, we refer to~\cite[p.~898]{Simpson88}.
One can analogously define orbifold versions of Higgs bundles and systems of Hodge bundles over klt pairs with standard coefficients.

\subsection{Hodge groups of Hermitian type} \label{sec hodge groups}

Recall that a bounded symmetric domain $\cD$ can always be expressed as a quotient
\[ \cD = \factor{G_0}{K_0}, \]
where $G_0$ is a \emph{Hodge group of Hermitian type} and $K_0$ is a maximal compact subgroup of $G_0$, unique up to conjugation.
(We refer to~\cite[Sections~8--9]{Simpson88} for definitions.)
Denote by $G$ and $K$ the complexifications of $G_0$ and $K_0$, respectively.
The Lie algebra $\frg$ of $G$ admits a \emph{Hodge decomposition}
\begin{align} \label{hd1}
\frg = \frg^{-1,1} \oplus \frg^{0,0} \oplus \frg^{1,-1}
\end{align}
such that $[\frg^{p,-p}, \frg^{q,-q}] \subset \frg^{p+q,-p-q}$ for all $p, q \in \Z$.

From now on, fix a Hodge group $G_0$ of Hermitian type, and let $K_0$, $G$ and $K$ be as above.

\begin{dfn}[Principal orbi-systems of Hodge bundles] \label{dfn system of hodge}
Let $(X, \Delta)$ be a klt pair with standard coefficients, and fix a smooth orbi-structure $\cC$ on $(X, \Delta)$.
\begin{enumerate}
\item A \emph{principal orbi-system of Hodge bundles $(P, \theta)$ on $(X, \Delta)$ associated to $G_0$} is an orbi-principal $K$-bundle $P$ on $(X, \Delta)$, together with a morphism
\begin{align*}
\theta \from \T{(X, \Delta)} \lto P \x_K \frg^{-1,1}
\end{align*}
of orbifold vector bundles such that for all local sections $u, v$ of $\T{(X, \Delta)}$, we have $[\theta(u), \theta(v)] = 0$.
(For Hodge groups of Hermitian type, the latter condition is automatically satisfied because $[\frg^{-1,1}, \frg^{-1,1}] \subset \frg^{-2,2} = 0$.)
\item A \emph{uniformizing orbi-system of Hodge bundles} is a principal orbi-system of Hodge bundles such that $\theta$ is an isomorphism.
\end{enumerate}
\end{dfn}

\begin{rem} \label{Hodge bundle}
Let $(P, \theta)$ be a principal orbi-system of Hodge bundles on $(X, \Delta)$.
Set $\sE \defn P \times_K \frg$ and $\sE^{p,-p} \defn P \times_K \frg^{p,-p}$ for $p \in \set{-1, 0, 1}$.
The decomposition~\lref{hd1} of $\frg$ induces a decomposition of $\sE$, given by
\begin{align*}
\sE = \bigoplus_{|p| \le 1} \sE^{p,-p}.
\end{align*}
The isomorphism $\theta$ endows $\sE$ with a Higgs field (which we again denote by $\theta$) whose restriction to each summand $\sE^{p,-p}$ is
\begin{align*}
\theta\big|_{\sE^{p,-p}} \from \sE^{p,-p} \lto \sE^{p-1,-p+1} \tensor \Omegap{(X, \Delta)}1,
\end{align*}
given by sending $a \mapsto \big( v \mapsto [ \theta(v), a ] \big)$.
It follows from the Jacobi identity in $\frg$ that $\theta \wedge \theta = 0$.
Thus $(\sE, \theta)$ is in fact an orbi-system of Hodge (vector) bundles.
Hence the name ``\emph{principal} orbi-system of Hodge bundles''.
\end{rem}

\begin{rem} \label{c1=0 automatic}
In the above setting, we always have the vanishing $\ccorb1{\sE} = 0$.
Indeed, the summand $P \times_K \frg^{0,0}$ of $P \times_K \frg$ is self-dual because $\frg^{0,0}$ is self-dual as a $K$-representation.
Since $P \times_K \frg^{-1,1}$ and $P \times_K \frg^{1,-1}$ are dual to each other, it follows that $\ccorb1{P \times_K \frg} = 0$.
\end{rem}

A metric $h$ on an orbifold vector bundle $\set{ V_\alpha }_{\alpha \in I}$ is a metric $h_\alpha$ on each $V_\alpha$ such that the usual compatibility conditions hold.
Chern classes of orbifold vector bundles have been defined and discussed in detail in~\cite[Section~3]{MYeq}, and we use the definitions therein.

\begin{dfn}[Uniformizing orbi-VHS]
Consider the setting of \cref{dfn system of hodge}.
A \emph{uniformizing orbi-variation of Hodge structure} is a uniformizing orbi-system of Hodge bundles $(P, \theta)$ such that the associated bundle $\sE = P \x_K \frg$ is equipped with a flat metric.
\end{dfn}

\noindent
This is equivalent, in our setting, to $\sE$ being polystable and having vanishing first and second (orbifold) Chern classes (cf.~the proof of \cref{main2}).

\section{Proof of \cref{simpson orbifolds}}

The proof of \cref{simpson orbifolds} consists of three steps.
The first step is, given a uniformizing orbi-VHS on $\cX$, to construct a period map $p \from \wt\cX \to \cD = \factor{G_0}{K_0}$.
For this, we follow exactly the procedure of Simpson in the case of manifolds, cf.~\cite[p.~900]{Simpson88}.
The second step is to show that $\wt\cX$ is actually a manifold and that $p$ is an isomorphism.
The third step is to prove the converse.

\begin{ntn-plain}
Let $\pi \from \wt\cX \to \cX$ be the orbifold universal cover of $\cX$.
Without further mention, we will denote the pullback of objects on $\cX$ to $\wt\cX$ by adding a tilde $\wt{\hspace{1ex}}$ on top of the corresponding symbol.
\end{ntn-plain}

\subsection*{Step I: the period map $p$}

Let $(P, \theta)$ be a uniformizing orbi-VHS on $\cX$.
Then by definition, the principal $K$-bundle $P$ admits a reduction in structure group from $K$ to $K_0$, i.e.~there is a $K_0$-bundle $P_H$ such that $P \isom P_H \x_{K_0} K$.
Also by definition, the connection $D_H \defn d_H + \theta + \overline{\theta_H}$ on the $G_0$-bundle $R_H \defn P_H \x_{K_0} G_0$ is flat.
It pulls back to a flat connection on $\wt{R_H} \defn \pi^* R_H$.

Since $\pi_1(\wt \cX) = 0$, it follows from \cref{triv} that there is a global trivialization
\[ \phi \from \wt{R_H} = \wt{P_H} \times_{K_0} G_0 \bij \wt\cX \times G_0. \]
Note that $\pi_1(\cX)$ acts on $\wt\cX$ by deck transformations.
Since $\wt{R_H}$ is a pullback from $\cX$, it admits a $\pi_1(\cX)$-linearization.
The isomorphism $\phi$, however, is not $\pi_1(\cX)$-equivariant.
The following diagram summarizes the situation.
\[ \begin{tikzcd}
\wt{P_H} \times_{K_0} G_0 \arrow{r}{\gamma^*}[swap]{\sim} \arrow{d}{\wr}[swap]{\phi} & \wt{P_H} \times_{K_0} G_0 \arrow{d}{\wr}[swap]{\phi}
&& r \arrow[r, mapsto]\arrow[d, mapsto] & \gamma\cdot r \arrow[d, mapsto]
\\
\wt\cX \times G_0 \arrow[r, dashed] & \wt\cX \times G_0
&& \phi(r) & \phi(\gamma \cdot r)
\end{tikzcd} \]
where $\gamma\cdot r \defn \gamma^* r$ for all $r \in \wt{P_H} \times_{K_0} G_0$ and $\gamma \in \pi_1(\cX)$.
The above diagram can be completed by an isomorphism $\wt\cX \times G_0 \bij \wt\cX \times G_0$ which is given by $\gamma$ in the first component and by a left translation of $G_0$ in the second component.
Thus we obtain a representation
\[ \sigma \from \pi_1(\cX) \to G_0 \]
such that
\begin{equation} \label{478}
\phi_2( \gamma \cdot g ) = \sigma(\gamma) \phi_2(g)
\end{equation}
for all $\gamma \in \pi_1(\cX)$ and $r \in \wt{R_H}$.
Here $\phi_2 \from \wt{R_H} \to G_0$ is the second component of $\phi$.

For each $x \in \wt\cX$, we have the fibre $\wt{P_H}(x) = P_H \big( \pi(x) \big) \subset \wt{R_H}(x) = R_H \big( \pi(x) \big)$, which is mapped by $\phi_2$ to the right $K_0$-coset $p(x) \defn \phi_2 \big( \wt{P_H}(x) \big) \subset G_0$.
This defines a holomorphic map
\[ p \from \wt\cX \lto \cD = \factor{G_0}{K_0}. \]
By~\lref{478}, we have
\[ p(\gamma \cdot x) = \phi_2 \big( \wt{P_H}(\gamma \cdot x) \big) = \phi_2 \big( \gamma \cdot \wt{P_H}(x) \big) = \sigma(\gamma) \phi_2 \big( \wt{P_H}(x) \big) = \sigma(\gamma) p(x) \]
for all $x \in \wt\cX$ and $\gamma \in \pi_1(\cX)$.
Therefore the map $p$ becomes $\pi_1(\cX)$-equivariant if we let $\pi_1(\cX)$ act on $\cD$ via the representation $\sigma$.

The map $p$ is a map of analytic stacks, where we endow the complex manifold $\cD$ with the trivial orbifold structure.
Since $\cD$ is in particular a complex space, $p$ factors through the coarse moduli space $\wt X$ of $\wt\cX$.
That is, we obtain a map $\wb p \from \wt X \to \cD$.

\subsection*{Step II: $p$ is an isomorphism}

By~\cite[Lemma~12.2.3]{CMP17}, the (holomorphic) tangent bundle of $\cD$ is given by
\[ \T \cD = \big[ \frg^{-1,1} \big] \defn G_0 \times_{K_0} \frg^{-1,1}, \]
where $G_0$ denotes the principal $K_0$-bundle $G_0 \to \factor{G_0}{K_0}$ on $\cD$.
By construction of~$p$, we have that
\[ p^* G_0 \isom \wt{P_H}. \]
Let $P_\cD \defn G_0 \x_{K_0} K$ be the extension of structure group from $K_0$ to $K$.
Then $\T \cD = P_\cD \x_K \frg^{-1,1}$ and $p^* P_\cD \isom \wt P$.
In particular, $p^* \T \cD = \wt P \x_K \frg^{-1,1}$.

Let $\big\{ \wt U_\alpha \big\}_{\alpha \in I}$ be an atlas of smooth charts for $\wt\cX$ lying above a family $\{ U_\alpha \}_{\alpha \in I}$ of open subsets of $\wt X$.
Then for each $\alpha \in I$, the factorization of $p$ via $\wb p$ (cf.~Step~I) yields a factorization of $p\big|_{\wt U_\alpha}$ as
\begin{equation} \label{515}
\wt U_\alpha \xrightarrow{\quad s_\alpha \quad} U_\alpha \xrightarrow{\quad t_\alpha \quad} \cD.
\end{equation}
Recall that by assumption there is an isomorphism $\theta \from \T \cX \bij P \x_K \frg^{-1,1}$.
The differential of $p$ is then given by $\wt\theta \from \T{\wt\cX} \bij \wt P \x_K \frg^{-1,1} = p^* \T \cD$.
In particular, $p$ is a local diffeomorphism, i.e.~$p\big|_{\wt U_\alpha} \from \wt U_\alpha \to \cD$ is \'etale.
By~\lref{515}, it follows that also $s_\alpha \from \wt U_\alpha \to U_\alpha$ is \'etale.
Thus $U_\alpha$ is smooth, and $\wt \cX$ is a manifold.

Let $\omega_\cD$ denote the Bergman metric on $\cD$, which is $\Aut(\cD)$-invariant~\cite[Ch.~VIII, Prop.~3.5]{Helgason78}.
Since $p$ is a $\pi_1(\cX)$-equivariant local diffeomorphism, the pullback $p^* \omega_\cD$ is then a $\pi_1(\cX)$-invariant metric on $\wt\cX$.
It therefore descends to an orbifold metric on $\cX$, which is complete since $\cX$ is compact.
This implies that also $\big( \wt\cX, p^* \omega_\cD \big)$ is complete.
By~\cite[Ch.~IV, Thm.~4.6]{KobayashiNomizu96}, it follows that $p \from \wt\cX \to \cD$ is a covering map.
Since $\cD$ is simply connected~\cite[Ch.~VIII, Thm.~4.6]{Helgason78}, $p$ is in fact an isomorphism.

\subsection*{Step III: the converse}

Suppose we have an isomorphism $p \from \wt\cX \bij \cD$.
Let $G_0 = \Aut(\cD)$ be the group of holomorphic automorphisms of $\cD$.
Then $\cD \isom G_0 / K_0$.
Since $\pi_1(\cX)$ acts holomorphically on $\wt\cX$, there is a representation $\sigma \from \pi_1(\cX) \to G_0$ such that $p$ is $\pi_1(\cX)$-equivariant.

As before, let $G_0 \to \cD$ be the natural $K_0$-bundle.
Set $\wt P \defn p^* ( G_0 \x_{K_0} K )$ and $\wt{R_H} \defn p^* ( G_0 \x_{K_0} G_0 )$.
Then:
\begin{enumerate}
\item $\wt P$ descends to a $K$-bundle $P$ on $\cX$, which admits a reduction of structure group $P_H$ to $K_0$.
\item $\wt{R_H} \isom \wt\cX \x G_0$ is trivial and carries a flat connection given by pulling back the Maurer--Cartan form on $G_0$.
This descends to a flat bundle $R_H$ on $\cX$, which satisfies $R_H \isom P_H \x_{K_0} G_0$.
\end{enumerate}
The differential of $p$ is a $\pi_1(\cX)$-invariant isomorphism
\[ \wt\theta \defn \d p \from \T{\wt\cX} \bij \wt P \x_K \frg^{-1,1}. \]
This descends to an isomorphism $\theta \from \T \cX \bij P \x_K \frg^{-1,1}$ of orbifold vector bundles on $\cX$.
The pair $(P, \theta)$ together with the reduction of structure group $P_H$ is therefore a uniformizing orbi-VHS on $\cX$. \qed

\section{Proof of \cref{main2} and of the main result}

The proof of \cref{main2} again consists of three steps and follows almost exactly the first three steps of the proof of~\cite[Thm.~A]{MYeq}.
The proof of \cref{main} then becomes a one-liner.

\subsection*{Step I: The orbi-system of Hodge sheaves $\sE_X$}

By~\cite[Proposition~2.6]{MYeq}, we know that there is a finite morphism $f \from Y \to X$ which is strictly adapted for $(X, \Delta)$ and whose extra ramification in codimension one (i.e.~away from $\supp \Delta$) is supported over a general element $H$ of a very ample linear system on $X$.
Let $N$ be the order of ramification of $f$ along $H$.
Then we have 
\begin{align*}
K_Y = f^* \big( K_X + \Delta + \big( 1 - \textstyle\frac{1}{N} \big) H \big).
\end{align*}
Set $D \defn \Delta + \big( 1 - \frac{1}{N} \big) H$ and define $X^\circ$ to be the largest open subset of $X$ over which the pair $(X, D)$ admits a smooth orbi-\'etale orbi-structure $\cC^\circ = \set{ U_\alpha, p_\alpha, X_\alpha }_{\alpha \in J}$.
We know that $\codim X{X \setminus X^\circ} \ge 3$ by~\cite[Lemma~2.7]{MYeq}.

Set $\Delta^\circ \defn \Delta\big|_{X^\circ}$ and $D^\circ \defn D\big|_{X^\circ}$ and consider the locally free orbi-sheaf $\T{(X^\circ, \Delta^\circ, p_\alpha)}$ on the pair $(X^\circ, D^\circ)$ with respect to the orbi-structure $\cC^\circ$.
We have by hypothesis that 
\begin{align} \label{iso}
\T{(X^\circ, \Delta^\circ, p_\alpha)} \isom P_\alpha \x_K \frg^{-1,1},
\end{align}
where the collection $\set{ P_\alpha }$ makes up the given orbi-principal $K$-bundle $P$ with respect to $\cC^\circ$.
As in \cref{Hodge bundle}, we can consider the orbi-system of Hodge bundles $(\sE_X, \theta)$, where $\sE_X \defn P \x_K \frg$ with respect to $\cC^\circ$.

Consider the reflexive sheaves $\T{(X, \Delta, f)}$ and $\Omegar{(X, \Delta, f)}1$ on $Y$.
Let $Y' \subset Y$ be the largest open subset over which $(Y, \emptyset)$ admits a smooth orbi-\'etale orbi-structure.
We denote by $Y^\circ$ the big open subset of $Y$ given by 
\begin{align*}
Y^\circ \defn f\inv(X^\circ) \cap Y' \subset Y.
\end{align*}
Note that $\reg Y \subset Y^\circ$.
The map $f$ restricts to a map $f^\circ \defn f\big|_{Y^\circ} \from Y^\circ \to X^\circ$, and $f^\circ$ is orbi-\'etale with respect to $D^\circ$ by construction.

\subsection*{Step II: Chern classes of $\sE_X$}

We denote by $\T{(X^\circ, \Delta^\circ, f)}$ and $\Omegar{(X^\circ, \Delta^\circ, f)}1$ the restrictions of the reflexive sheaves $\T{(X, \Delta, f)}$ and $\Omegar{(X, \Delta, f)}1$ to $Y^\circ$.
Let $\set{ V_\beta, q_\beta, Y_\beta }_{\beta \in K}$ be a smooth orbi-\'etale (i.e.~quasi-\'etale, in this case) orbi-structure for the pair $(Y^\circ, \emptyset)$, which exists by~\cite[Proposition~2.6 and Lemma~2.7]{MYeq}.
Consider the following diagram:
\[ \begin{tikzcd}
W_{\alpha\beta} \arrow{rr}{r_{\alpha\beta}} \arrow{dd}[swap]{g_{\alpha\beta}}
&& Y_\beta \arrow{d}{q_{\beta}} \\
&& Y^\circ \arrow{d}{f^\circ} \\
X_\alpha \arrow{rr}{p_\alpha} && X^\circ
\end{tikzcd} \]
Here $W_{\alpha\beta}$ denotes the normalization of $X_\alpha \x_{X^\circ} Y_\beta$.
Since $p_\alpha$ and $f$ are both orbi-\'etale with respect to $D^\circ$, they ramify to the same order along $D^\circ$, and since $q_\beta$ is quasi-\'etale, the smooth orbi-\'etale orbi-structures $\cC^\circ$ and $\set{ f(V_\beta), f \circ q_\beta, Y_\beta}$ on $(X^\circ, D^\circ)$ are compatible.
In particular, the morphisms $g_{\alpha\beta}$ and $r_{\alpha\beta}$ are \'etale, so that $W_{\alpha\beta}$ is smooth.
Moreover, we have $g_{\alpha\beta}^* \T{(X^\circ, \Delta^\circ, p_\alpha)} \isom r_{\alpha\beta}^* \T{Y_\beta}$, where $\T{Y_\beta} \defn q_\beta^{[*]} \T{(X^\circ, \Delta^\circ, f)}$.
It follows by~\lref{iso} that $r_{\alpha\beta}^* \T{Y_\beta} \isom P_{\alpha\beta} \x_K \frg^{-1,1}$, where $P_{\alpha\beta} \defn g_{\alpha\beta}^* P_{\alpha}$ is a principal $K$-bundle on $W_{\alpha\beta}$.

The collection $\set{ P_{\alpha\beta} }$ defines an orbi-principal $K$-bundle on $Y^\circ$ with respect to the smooth orbi-\'etale (= quasi-\'etale) orbi-structure $\set{ q_\beta \circ r_{\alpha\beta}(W_{\alpha\beta}), q_\beta \circ r_{\alpha\beta}, W_{\alpha\beta} }$ on $(Y^\circ, \emptyset)$.
Since $\reg Y$ is smooth, $q_\beta$ is \'etale over $\reg Y$ and therefore we have $(q_\beta \circ r_{\alpha\beta})^ *\T{(X^\circ, \Delta^\circ, f)} \isom P_{\alpha\beta} \x_K \frg^{-1,1}$ over $\reg Y$.
In particular, the orbifold bundle $\T{(X^\circ, \Delta^\circ, f)}\big|_{\reg Y}$ is an honest vector bundle on $\reg Y$, and we can write
\begin{align}\label{hf}
\T{(X^\circ, \Delta^\circ, f)}\big|_{\reg Y} \isom Q \x_{K} \frg^{-1,1} \ndef \sE_{\reg Y}^{-1,1},
\end{align}
where $Q$ is a principal $K$-bundle on $\reg Y$.
In other words, the orbi-principal $K$-bundle $\set{ P_{\alpha\beta} }$ on $Y^\circ$ restricts to a principal $K$-bundle $Q$ on $\reg Y$.

On $\reg Y$, we can also form the associated vector bundles $\sE_{\reg Y}^{0,0} \defn Q \x_{K} \frg^{0,0}$ and $\sE_{\reg Y}^{1,-1} \defn Q \x_{K} \frg^{1,-1} \isom \Omegar{(X^\circ, \Delta^\circ, f)}1\big|_{\reg Y}$, and the direct sum
\begin{align*}
\sE_{\reg Y} \defn Q \times_{K} \frg = \sE_{\reg Y}^{-1,1} \oplus \sE_{\reg Y}^{0,0} \oplus \sE_{\reg Y}^{1,-1}.
\end{align*}
From the inclusion $\Omegar{(X^\circ, \Delta^\circ, f)}1\big|_{\reg Y} \subset \Omegap{\reg Y}1$ and the isomorphism~\lref{hf}, the bundle $(\sE_{\reg Y}, \theta)$ inherits the structure of a system of Hodge bundles, where $\theta \from \sE_{\reg Y} \to \sE_{\reg Y} \tensor \Omegap{\reg Y}1$ is defined as in \cref{Hodge bundle}.

Let $\sE_Y$ denote the reflexive extension of $\sE_{\reg Y}$ to $Y$.
For brevity, set $A \defn K_X + \Delta$.
Following Step~II of the proof of~\cite[Theorem~A]{MYeq}, we arrive at the following equalities of orbifold Chern classes:
\begin{align}
\ccorb2{\sE_Y} \cdot (f^*A)^{n-2} & = \deg(f) \cdot \ccorb2{\sE_X} \cdot A^{n-2}, \label{ceq1} \\
\cpcorb12{\sE_Y} \cdot (f^*A)^{n-2} & = \deg(f) \cdot \cpcorb12{\sE_X} \cdot A^{n-2}. \label{ceq2}
\end{align}

\subsection*{Step III: $(X, \Delta)$ is an orbifold}

Consider again the orbi-system of Hodge bundles $\sE_X$ on $(X^\circ, \Delta^\circ)$.
By assumption and by \cref{c1=0 automatic}, $\sE_X$ satisfies the Chern class equalities
\begin{align*}
\ccorb1{\sE_X} \cdot A^{n-1} = \ccorb2{\sE_X} \cdot A^{n-2} = 0.
\end{align*}
From the equalities~\lref{ceq1} and~\lref{ceq2}, it follows that $\sE_Y$ also satisfies
\begin{align*}
\ccorb1{\sE_Y} \cdot (f^* A)^{n-1} = \ccorb2{\sE_Y} \cdot (f^* A)^{n-2} = 0.
\end{align*}
The surjective map of Lie algebras $\frg^{-1,1} \tensor \frg^{1,-1} \to \frg^{0,0}$ induces a surjection of vector bundles
\begin{align} \label{surj}
\sE_{\reg Y}^{-1,1} \tensor \sE_{\reg Y}^{1,-1} \lto \sE_{\reg Y}^{0,0}.
\end{align}
Let $\sE_Y^{p,-p}$ denote the reflexive extension of $\sE_{\reg Y}^{p,-p}$ from $\reg Y$ to $Y$, for all $p \in \set{ -1, 0, 1 }$.
We have $\sE_Y^{-1,1} \isom \T{(X, \Delta, f)}$ and $\sE_Y^{1,-1} \isom \Omegar{(X, \Delta, f)}1$.
The reflexive sheaf $\sE_Y$ thus decomposes as 
\begin{align*}
\sE_Y \isom \T{(X, \Delta, f)} \oplus \sE_Y^{0,0} \oplus \Omegar{(X, \Delta, f)}1.
\end{align*}
We know from~\cite[Sec.~4.4, proof of Thm.~C]{GT22} that the sheaf $\Omegar{(X, \Delta, f)}1$ is $f^*A$-semistable, and hence so is its dual $\T{(X, \Delta, f)}$.
Note that the (semi-)stability of a reflexive sheaf on $Y$ is equivalent to the (semi-)stability of its restriction to $\reg Y$, in the sense of~\cite[Definition~2.8]{Patel23}.
Thus the vector bundles $\sE_{\reg Y}^{-1,1}$ and $\sE_{\reg Y}^{1,-1}$ are $f^*A$-semistable on $\reg Y$.
Since both sides of surjection~\lref{surj} have degree zero w.r.t.~$f^* A$, it follows that $\sE_{\reg Y}^{0,0}$ is $f^*A$-semistable too.
The assumption that $K_X + \Delta$ is ample implies that $\T{(X, \Delta, f)}$ has negative degree.
Thus by the same arguments as in the proof of~\cite[Proposition~4.4]{Patel23}, it follows that $(\sE_{\reg Y}, \theta)$ is $f^*A$-polystable as a Higgs sheaf on $\reg Y$.

By the correspondence from~\cite[Theorem~5.1]{GKPT20}, we see that the Higgs bundle $\sE_{\reg Y} = Q \x_K \frg$ is induced by a tame, purely imaginary harmonic bundle.
In particular, it carries a flat metric.
Using the charts $W_{\alpha\beta}$ from Step~II, we can consider $\sE_{\reg Y}$ also as the uniformizing orbi-system of Hodge bundles $P \times_K \frg$.
Therefore $(P, \theta)$ is a uniformizing orbi-VHS on $(X^\circ, \Delta^\circ)$.

Let $g \from Z \to Y$ be a maximally quasi-\'etale cover, whose existence is guaranteed by~\cite[Theorem~1.5]{GKP16}.
Then by~\cite[Prop.~3.17]{GKPT20}, the sheaf $g^{[*]} \sE_Y$ is locally free on $Z$.
Set $W \defn X \setminus H \subset X$ and $h \defn f \circ g \from Z \to X$.
On the open subset $h\inv(W) \subset Z$, we have
\begin{align*}
g^{[*]} \sE_Y \isom g^{[*]} \big( \T Y \oplus \sE_Y^{0,0} \oplus \Omegar Y1 \big) \isom \T Z \oplus g^{[*]} \sE_Y^{0,0} \oplus \Omegar Z1.
\end{align*}
In particular, the restriction of $g^{[*]} \sE_Y$ to $h\inv(W)$ contains $\T Z$ as a direct summand.
It follows that $\T Z\big|_{h\inv(W)}$ is locally free, and hence by the solution to the Lipman--Zariski conjecture for klt spaces~\cite{GKKP11, GK13, Dru13} that $h\inv(W)$ is smooth.

By construction, the map $g\big|_{h\inv(W)} \from h\inv(W) \to W$ branches in codimension one exactly at $\Delta|_W$.
It follows from~\cite[Corollary~2.20]{MYeq} that the same holds for its Galois closure $\wt W \to W$, and $\wt W$ is moreover smooth, because it is an \'etale cover of the smooth space $h\inv(W)$.
This implies that the pair $(W, \Delta|_W)$ has only quotient singularities.

Thus far, the divisor $H$ was only assumed to be general in its (basepoint-free) linear system.
The above argument can thus be repeated by choosing general elements $H_1, \dots, H_{n + 1} \in |H|$.
We conclude that the pair $(X, \Delta)$ has only quotient singularites.
This proves~\lref{main2.1}.

We have already seen above that $(P, \theta)$ is a uniformizing orbi-VHS on $(X^\circ, \Delta^\circ)$.
But~\lref{main2.1} means that $X = X^\circ$, therefore~\lref{main2.2} is also proved. \qed

\subsection*{Proof of \cref{main}}

This is now an immediate consequence of \cref{simpson orbifolds} and \cref{main2}. \qed

\section{Chern classes of direct summands}

In this section, we prove the following auxiliary result, which will be used in the proof of \cref{Hn ample}.
The argument is based on ideas from~\cite[Lemma~3.1]{Beauville00}, adapted to the orbifold setting.

\begin{prp} \label{c12 vanishing}
Let $(X, \Delta)$ be as in \cref{std} (but without assuming that $K_X + \Delta$ is ample).
Assume that the orbifold tangent bundle $\T{(X^\circ, \Delta^\circ)}$ of the orbifold locus $(X^\circ, \Delta^\circ)$ has an orbifold line bundle direct summand $L$, i.e.
\[ \T{(X^\circ, \Delta^\circ)} = L \oplus F. \]
Then for any \Q-Cartier divisors $D_1, \dots, D_{n - 2}$ on $X$, we have
\[ \cpcorb12L \cdot [ D_1 ] \cdots [ D_{n - 2} ] = 0. \]
\end{prp}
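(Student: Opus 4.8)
The plan is to reduce the statement to a computation on a smooth quasi-\'etale cover of $X^\circ$, following the strategy of Beauville adapted to orbifolds. First I would observe that, since $L$ is a direct summand of the orbifold tangent bundle $\T{(X^\circ, \Delta^\circ)}$, the dual sheaf $L\dual$ is a direct summand of the orbifold cotangent bundle $\Omegap{(X^\circ, \Delta^\circ)}1$. This means there is an orbifold $1$-form $\omega$ (with values in $L$, or rather the $L\dual$-component of $\id$) realizing the splitting projection; concretely, on a smooth orbi-chart the inclusion $L \inj \T{(X^\circ, \Delta^\circ)}$ gives a vector field $v$ whose dual functional, via the splitting, yields a closed-type relation. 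The key point I want to extract is an \emph{orbifold foliation}: the line subbundle $L \subset \T{(X^\circ, \Delta^\circ)}$ is automatically integrable (a rank-one distribution is always involutive), so $L$ is the tangent sheaf of a (possibly singular) orbifold foliation of rank one, and $F$ is a complement.

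Next I would compute $\cpcorb12L$ using the splitting. Over the orbifold locus, working on the smooth orbi-charts $X_\alpha$, we can pick a maximally quasi-\'etale cover $g \from Z \to X$ (guaranteed by~\cite[Theorem~1.5]{GKP16}) so that $g^{[*]}$ of all the relevant reflexive sheaves becomes locally free on $Z$, and $g^{[*]}L$ is an honest line bundle on (the smooth locus, which is all of the preimage of $X^\circ$ of) $Z$; since $g$ is quasi-\'etale and finite of degree $\deg g$, one has $\cpcorb12L \cdot [D_1]\cdots[D_{n-2}] = \frac1{\deg g}\, \cc12{g^*L} \cdot (g^*D_1)\cdots(g^*D_{n-2})$ after the usual reduction to a big open subset where everything is a genuine bundle (the complement has codimension $\ge 3$ by~\cite[Lemma~2.7]{MYeq}, hence contributes nothing to an intersection number of this shape). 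So it suffices to show $c_1(M)^2 \cdot (g^*D_1)\cdots(g^*D_{n-2}) = 0$ for the line bundle $M \defn g^*L$ on the smooth variety $Z^\circ \defn g\inv(X^\circ)$, again extended by zero outside. Here $M \subset \T{Z^\circ}$ is a saturated line subbundle, i.e.\ a rank-one foliation on $Z^\circ$.

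The heart of the matter is then the classical fact (Beauville, \cite[Lemma~3.1]{Beauville00}) that a line bundle $M$ which is a \emph{direct summand} of the tangent bundle of a compact complex manifold has $c_1(M)^2 = 0$ against any $n-2$ divisor classes. The mechanism: the projection $\T{Z^\circ} \surj M$ composed with the inclusion $M \inj \T{Z^\circ}$ is an idempotent endomorphism $\pi$ of $\T{Z^\circ}$ with image $M$; dually one gets a holomorphic $1$-form $\eta$ with values in $M$ (a section of $\Omegap{}1 \tensor M$) which is nowhere zero along $M$ and satisfies an integrability/closedness identity coming from $[L,L]\subset L$. One shows $\eta \wedge d\eta = 0$ (Frobenius for the rank-one distribution), and then the $M$-valued $2$-form $d\eta$, viewed appropriately, represents $c_1(M)$ up to the splitting, while $\eta\wedge$(something) kills it; unwinding, $c_1(M) \cup c_1(M)$ is represented by a form that is a wedge with $\eta$, hence vanishes on the foliation in a way that forces the product with any $(n-2)$ further classes to be zero. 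Concretely, I would use that $d\eta = \beta \wedge \eta$ for some $1$-form $\beta$ with $d\beta$ representing $c_1(M)$ (this is exactly Beauville's computation), whence $c_1(M)^2$ is represented by $(d\beta)^2 = d(\beta \wedge d\beta) = d(\beta \wedge \beta \wedge \eta \cdot(\dots))$, an exact form, so its cup product with the closed forms $g^*D_i$ integrates to zero by Stokes.

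\textbf{Main obstacle.} The delicate point is not the cohomological computation itself but making the reduction to the manifold case rigorous in the orbifold/reflexive setting: one must check that passing to the maximally quasi-\'etale cover $Z$, restricting to the big open locus where $g^{[*]}\T{(X^\circ,\Delta^\circ)}$ is locally free and agrees with $\T{Z}$ (using the Lipman--Zariski-type input already invoked in the proof of~\cref{main2}), and extending cohomology classes by zero across a codimension-$\ge 3$ set, all commute with the relevant intersection products, and that the direct-sum splitting of $\T{(X^\circ,\Delta^\circ)}$ genuinely pulls back to a direct-sum splitting of $\T{Z}$ on that locus. Once that bookkeeping is in place — and it is the same bookkeeping already used in Step~II--III of the proof of~\cref{main2} — the geometric input is precisely~\cite[Lemma~3.1]{Beauville00}, which I would either cite directly on $Z^\circ$ (after adding back the zero-measure boundary) or reprove in two lines via the Frobenius/Stokes argument sketched above.
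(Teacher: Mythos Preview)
Your reduction step has a genuine gap. A maximally quasi-\'etale cover $g \from Z \to X$ is by definition \'etale in codimension one, so it does \emph{not} absorb the branching encoded by $\Delta$: on such a cover one has $g^{[*]}\T{(X^\circ,\Delta^\circ)} \isom \T{(Z^\circ, g^{-1}\Delta^\circ)}$ as orbifold bundles, not $\T{Z^\circ}$. Thus $M = g^{[*]}L$ is a direct summand of an orbifold tangent bundle on $Z$, not of the honest tangent bundle $\T Z$, and you are back where you started. (The passage in the proof of \cref{main2} that you invoke uses an \emph{adapted} cover $f \from Y \to X$, not a quasi-\'etale one; but that $Y$ is singular, so Beauville's lemma does not apply directly there either.) Even setting this aside, Beauville's~\cite[Lemma~3.1]{Beauville00} is stated for compact manifolds, and your open locus $Z^\circ$ is not compact; the ``add back a zero-measure boundary'' hand-wave is exactly where the Stokes argument would need care, and your sketch $(d\beta)^2 = d(\beta \wedge \beta \wedge \eta \cdots)$ is not a correct identity ($\beta \wedge \beta = 0$ for a $1$-form).

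The paper avoids all of this by running Beauville's actual argument---via the Atiyah class, not a Frobenius/Stokes computation---directly in the orbifold category. One considers the jet sequence $0 \to L \tensor \Omegap{(X^\circ,\Delta^\circ)}1 \to J^1 L \to L \to 0$ and shows, using the direct-summand hypothesis, that for a suitably chosen hermitian metric $h$ the extension form (equivalently the curvature) satisfies $\ccorb1{L,h} \in \cA^{0,1}(L\dual) \subset \cA^{1,1}(X^\circ,\Delta^\circ)$. Then $\cpcorb12{L,h} \in \cA^{0,2}\big(\bigwedge^2 L\dual\big) = 0$ vanishes \emph{pointwise} as an orbifold form, so the intersection number is computed by integrating zero over a complete intersection surface via~\cite[Lemma~3.5]{MYeq}. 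No global cover, no compactness subtlety, no Stokes.
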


\subsection{Extension form of a short exact sequence} \label{splitting ses}

Let
\[ 0 \lto S \lto E \xrightarrow{\;\;\pi\;\;} Q \lto 0 \]
be a short exact sequence of orbifold vector bundles on $(X^\circ, \Delta^\circ)$.
We may choose an orbifold hermitian metric $h$ on $E$.
Consider the orthogonal complement $S^\perp$ of $S$ in $E$ w.r.t.~the metric $h$.
Since $S^\perp \isom Q$ via the restriction of $\pi$, this gives rise to a smooth splitting $\gamma \from Q \to E$ of $\pi$.
In other words, $\gamma \in \cA^0(\sHom Q.E.)$ and $\pi \circ \gamma = \id_Q$.
Since $\sHom Q.E.$ is a holomorphic orbifold vector bundle, we can consider $\delbar \gamma \in \cA^{0,1}(\sHom Q.E.)$.

Denote by $\pi_* \from \sHom Q.E. \to \sEnd Q.$ the map $f \mapsto \pi \circ f$, and denote the induced map $\pi_* \from \cA^{0,1}(\sHom Q.E.) \to \cA^{0,1}(\sEnd Q.)$ by the same symbol.
Then we have
\[ \pi_* ( \delbar \gamma ) = \delbar ( \pi_* \gamma ) = \delbar ( \id_Q ) = 0. \]
By the short exact sequence $0 \to \sHom Q.S. \to \sHom Q.E. \xrightarrow{\pi_*} \sEnd Q. \to 0$, it follows that $\delbar \gamma$ is contained in the subspace $\cA^{0,1}(\sHom Q.S.) \subset \cA^{0,1}(\sHom Q.E.)$.
We call
\[ \delbar \gamma \in \cA^{0,1}(\sHom Q.S.) \]
the \emph{extension form} of the sequence $0 \to S \to E \to Q \to 0$ w.r.t.~the metric $h$.

If we pass to the dual sequence $0 \to Q \dual \to E \dual \to S \dual \to 0$ and consider on $E \dual$ the metric induced by $h$, we get (up to a sign) the same extension form under the identification
\[ \cA^{0,1}(\sHom S \dual.Q \dual.) = \cA^{0,1}(\sHom Q.S.). \]

\subsection{The Atiyah sequence}

Let $E$ be an orbifold vector bundle on $(X^\circ, \Delta^\circ)$.
Consider the first jet bundle $J^1 E$, defined as in~\cite[Sec.~4, p.~193]{Atiyah57} for ordinary vector bundles.
(In that paper, the notation $D(E)$ is used.)
Then $J^1 E$ is again an orbifold vector bundle and there is a short exact sequence
\begin{equation} \label{jet ses}
0 \lto E \tensor \Omegap{(X^\circ, \Delta^\circ)}1 \lto J^1 E \lto E \lto 0,
\end{equation}
called the \emph{jet bundle sequence}.
By choosing an orbifold hermitian metric $\wt h$ on $J^1 E$ and applying the construction from \cref{splitting ses}, we get an extension form
\begin{align*}
\delbar \gamma & \in \cA^{0,1} \Big( \sHom E.E \tensor \Omegap{(X^\circ, \Delta^\circ)}1. \Big) \\
& = \cA^{0,1} \Big( \sEnd E. \tensor \Omegap{(X^\circ, \Delta^\circ)}1 \Big) \\
& = \cA^{1,1}( \sEnd E. ).
\end{align*}
This is the curvature tensor of $E$ w.r.t.~the metric $h$ induced by $\wt h$.
In particular, by taking the trace in the endomorphism part we get an element $\tr(\delbar \gamma) \in \cA^{1,1}(X^\circ, \Delta^\circ)$ which equals the first Chern form $\ccorb1{E, h}$ of $E$, an orbifold differential form of type~$(1, 1)$.

\subsection{Direct summands of the tangent bundle}

Assume from now on additionally that $E$ is a direct summand of $\T{(X^\circ, \Delta^\circ)}$, i.e.
\[ \T{(X^\circ, \Delta^\circ)} = E \oplus F. \]
Then~\lref{jet ses} can be written as
\[ 0 \lto \sEnd E. \oplus \sHom F.E. \lto J^1 E \lto E \lto 0. \]
We claim that this sequence splits over the subsheaf $\sHom F.E.$.
That is, we claim there is an $\O{(X^\circ, \Delta^\circ)}$-linear map $D \from J^1 E \to \sHom F.E.$ such that $D \circ i = \id_{\sHom F.E.)}$, where $i \from \sHom F.E. \to J^1 E$ is the inclusion.
Recalling that $J^1 E = E \oplus \big( E \tensor \Omegap{(X^\circ, \Delta^\circ)}1 \big)$ as sheaves of abelian groups, the map $D$ is defined by
\[ D(s, t \tensor \alpha)(u) \defn \alpha(u) \cdot t + \pr_E \big( [s, u] \big), \]
where $s, t \in E$, $\alpha \in \Omegap{(X^\circ, \Delta^\circ)}1$ and $u \in F$ are local sections, $\pr_E \from \T{(X^\circ, \Delta^\circ)} \to E$ is the projection and $[-, -]$ denotes the Lie bracket of (orbifold) vector fields.
It is easy to check that $D$ is indeed $\O{(X^\circ, \Delta^\circ)}$-linear and a splitting over $\sHom F.E.$.
In particular, we have a decomposition
\[ J^1 E = \sHom F.E. \oplus \ker(D) \]
as $\O{(X^\circ, \Delta^\circ)}$-modules.

Now choose the metric $\wt h$ on $J^1 E$ in such a way that this decomposition becomes orthogonal.
Then the extension form of~\lref{jet ses} is contained in the subspace
\[ \cA^{0,1} \Big( \sEnd E. \tensor E \dual \Big) \subset \cA^{0,1} \Big( \sEnd E. \tensor \Omegap{(X^\circ, \Delta^\circ)}1 \Big). \]
This can be seen by dualizing~\lref{jet ses}, applying the construction from \cref{splitting ses} and noting that on the direct summand $\sHom F.E. \dual$, the smooth splitting $\gamma$ is given by the holomorphic map $D \dual$.
Taking the trace, it follows that
\[ \ccorb1{E, h} \in \cA^{0,1}( E \dual ) \subset \cA^{0,1} \big( \Omegap{(X^\circ, \Delta^\circ)}1 \big) = \cA^{1,1}(X^\circ, \Delta^\circ). \]

\subsection{Proof of \cref{c12 vanishing}}

We apply the preceding considerations to $E \defn L$ and obtain that for some metric $h$ on $L$, we have
\[ \ccorb1{L, h} \in \cA^{0,1}( L \dual ). \]
Therefore
\[ \cpcorb12{L, h} = \ccorb1{L, h} \wedge \ccorb1{L, h} \in \cA^{0,2} \big( \! \textstyle\bigwedge^2 L \dual \big) = 0. \]
By multilinearity of the intersection product, we may assume that the divisors $D_i$ are very ample and general in their respective linear system.
Consider the complete intersection surface $S \defn D_1 \cap \cdots \cap D_{n - 2} \subset X^\circ$.
By~\cite[Lemma~3.5]{MYeq}, we obtain
\[ \cpcorb12L \cdot [ D_1 ] \cdots [ D_{n - 2} ] = \int_S \cpcorb12{L, h}\!\big|_S = \int_S 0 = 0, \]
as desired.
(Note that in~\cite[Lemma~3.5]{MYeq}, only the analogous statement for the second Chern class $\mathrm c_2$ is given and it is assumed that $D_1 = \dots = D_{n - 2}$.
However, the proof for $\mathrm c_1^2$ and for a tuple of different divisors works the same way.) \qed

\section{Quotients of the polydisc}

In this section, we give a proof of \cref{Hn ample}.
We also prove a partial converse and discuss the case of minimal models (as opposed to canonical models).

\subsection*{Proof of \cref{Hn ample}}

Consider the Lie groups
\[ G_0 = \SL2\R ^n, \quad K_0 = \U1 ^n, \quad G = \SL2\C ^n, \quad K = (\C^*)^n. \]
The splitting assumption implies that the bundle $\T{(X^\circ, \Delta^\circ)}$ admits a reduction of structure group from $\GL n\C$ to $K$.
Each fiber is isomorphic, as a $K$-representation, to $\frg^{-1,1} \isom \C^n$, where $\frg = \mathfrak{sl}_2(\C)^n$ is the Lie algebra of $G$.
Hence there is an isomorphism
\[ \theta \from \T{(X^\circ, \Delta^\circ)} \bij P \x_K \frg^{-1,1}, \]
where $P$ is the principal $K$-orbibundle on $(X^\circ , \Delta^\circ)$ corresponding to the above reduction of structure group.
In other words, $(P, \theta)$ is a uniformizing orbi-system of Hodge bundles on $(X^\circ, \Delta^\circ)$, corresponding to $G_0$.

We will now check the vanishing condition
\[ \ccorb2{P \x_K \frg} \cdot [K_X + \Delta]^{n-2} = 0 \]
from~\lref{main.2}.
To this end, note that $\frg^{-1,1}$ and $\frg^{1,-1}$ are dual as $K$-representations, while $\frg^{0,0}$ is the trivial representation.
Therefore
\begin{align*}
P \x_K \frg & = \bigoplus_{|i|\le1} P \x_K \frg^{-i,i} \\
& \isom \T{(X^\circ, \Delta^\circ)} \oplus \O{(X^\circ, \Delta^\circ)}^{\oplus n} \oplus \Omegap{(X^\circ, \Delta^\circ)}1 \\
& \isom \bigoplus_{i=1}^n \big( \sL_i \oplus \O{(X^\circ, \Delta^\circ)} \oplus \sL_i \dual \big).
\end{align*}
By the calculus of (orbifold) Chern classes, we get $\ccorb1{P \times_K \frg} = 0$ and
\[ \ccorb2{P \times_K \frg} = - \sum_{i=1}^n \cpcorb12{\sL_i}. \]
From \cref{c12 vanishing}, it follows that
\[ \ccorb2{P \times_K \frg} \cdot [K_X + \Delta]^{n-2} = - \sum_{i=1}^n \cpcorb12{\sL_i} \cdot [K_X + \Delta]^{n-2} = 0 \]
holds.
We may now conclude from \cref{main} that $(X, \Delta)$ is uniformized by the polydisc $\H n = \factor{G_0}{K_0}$. \qed

\subsection*{On the converse of \cref{Hn ample}}

It is not true in general that if the orbifold universal cover is the polydisc, then the orbifold tangent bundle splits into a direct sum of line bundles.
This is due to the fact that $\Aut(\H n)$ is disconnected, and strictly larger than $\Aut(\bH)^n$ acting diagonally.
More precisely, by~\cite[Cor.~on p.~167]{Rudin69} there is a split short exact sequence
\begin{equation} \label{Aut Hn ses}
0 \lto \Aut(\bH)^n \lto \Aut(\H n) \lto S_n \lto 0,
\end{equation}
where $S_n$ denotes the symmetric group on $n$ letters.
Therefore we have a semidirect product decomposition $\Aut(\H n) \isom \Aut(\bH)^n \rtimes S_n$.

A suitable converse of \cref{Hn ample} can now be formulated as follows.

\begin{prp}[Orbifolds with universal cover the polydisc] \label{Hn ample converse}
Let $(X, \Delta)$ be as in \cref{std}.
Assume that the orbifold universal cover $\pi \from \wt X_\Delta \to X$ of $(X, \Delta)$ is the polydisc.
Then there is a finite orbi-\'etale (w.r.t.~$\Delta$) Galois cover $f \from Y \to X$, where $Y$ is a projective manifold with $K_Y$ ample and whose tangent bundle splits as a direct sum of line bundles,
\[ \T Y \isom \bigoplus_{i=1}^n \sL_i. \]
\end{prp}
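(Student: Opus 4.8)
The plan is to realize $X$ as a quotient of the polydisc by a lattice and then pass to a well-chosen finite-index subgroup that kills the permutation of the factors. Since the orbifold universal cover of $(X,\Delta)$ is $\H n$, the pair is \emph{developable}, so by \cite[Theorem~2.16]{MYeq} its orbifold fundamental group $\Gamma \defn \pi_1(X,\Delta)$ acts properly discontinuously and cocompactly on $\H n$ by holomorphic automorphisms, with $X = \factor{\H n}{\Gamma}$ and with the quotient map $\pi\from\H n\to X$ orbi-\'etale with respect to $\Delta$; in particular $K_{\H n} = \pi^*(K_X+\Delta)$ and $\Gamma\subset\Aut(\H n)$ is a cocompact lattice. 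I would then produce a subgroup $\Gamma'\subset\Gamma$ that is torsion-free, normal, of finite index, and contained in $\Aut(\bH)^n$, and set $Y\defn\factor{\H n}{\Gamma'}$.

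To construct $\Gamma'$, first compose the inclusion $\Gamma\subset\Aut(\H n)$ with the projection to $S_n$ in the split exact sequence~\eqref{Aut Hn ses}, obtaining a homomorphism $\Gamma\to S_n$ whose kernel $\Gamma_1 = \Gamma\cap\Aut(\bH)^n$ is normal of finite index in $\Gamma$. Next, since $\Gamma$ is finitely generated (being a quotient of $\pi_1$ of a quasi-projective variety) and linear, Selberg's lemma provides a torsion-free normal subgroup $\Gamma_0\trianglelefteq\Gamma$ of finite index. Then $\Gamma'\defn\Gamma_0\cap\Gamma_1$ is torsion-free, normal in $\Gamma$, of finite index, and acts diagonally on $\H n$.

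Then I would verify that $Y = \factor{\H n}{\Gamma'}$ has the required properties. As $\Gamma'$ is torsion-free and acts properly discontinuously, it acts freely, so $Y$ is a compact complex manifold and $\pi'\from\H n\to Y$ is an \'etale universal covering. The induced map $f\from Y\to X$ is finite and, because $\Gamma'\trianglelefteq\Gamma$, Galois with group $\Gamma/\Gamma'$; being finite over the projective variety $X$, $Y$ is a projective manifold. Comparing canonical divisors via $\pi = f\circ\pi'$, $K_{\H n} = \pi'^*K_Y$ and $K_{\H n} = \pi^*(K_X+\Delta)$, and using that $\pi'^*$ is injective on ($\Q$-)divisors, one gets $K_Y = f^*(K_X+\Delta)$; hence $f$ is orbi-\'etale with respect to $\Delta$ by the characterization of orbi-\'etale morphisms recalled after the definition of adapted morphisms, and $K_Y$ is ample because $K_X+\Delta$ is. Finally, writing $p_i\from\H n\to\bH$ for the $i$-th projection, one has $\T{\H n} = \bigoplus_{i=1}^n p_i^*\T\bH$, and since $\Gamma'$ preserves each factor every summand $p_i^*\T\bH$ is $\Gamma'$-equivariant and descends to a line bundle $\sL_i$ on $Y$, whence $\T Y\isom\bigoplus_{i=1}^n\sL_i$.

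I expect the only genuine work to be the group-theoretic bookkeeping in the second step: arranging $\Gamma'$ to be torsion-free, normal in $\Gamma$, of finite index, and inside $\Aut(\bH)^n$ simultaneously, and then matching torsion-freeness of $\Gamma'$ with the orbifold-theoretic requirement that $f$ be orbi-\'etale with respect to $\Delta$ (equivalently $K_Y = f^*(K_X+\Delta)$). Once the lattice picture and the decomposition~\eqref{Aut Hn ses} are in place, the remaining steps are essentially formal.
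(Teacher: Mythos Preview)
Your proposal is correct and follows essentially the same route as the paper: pass to a finite-index subgroup of $\Gamma=\pi_1(X,\Delta)$ that lies in $\Aut(\bH)^n$, apply Selberg's lemma to get a torsion-free finite-index subgroup, arrange normality, and then verify that the quotient $Y$ has the stated properties. The only cosmetic difference is the order of operations---the paper first passes to $\Gamma_0\subset\Aut(\bH)^n$ and applies Selberg there (where linearity is transparent via $\Aut(\bH)\cong\PGL2\R$), whereas you apply Selberg to $\Gamma$ itself; your version is fine once one notes that $\Aut(\H n)\cong\Aut(\bH)^n\rtimes S_n$ embeds linearly.
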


\begin{proof}
It follows from the assumptions that $(X, \Delta)$ is in fact an orbifold.
The fundamental group $\Gamma \defn \pi_1(X, \Delta)$ is a finitely generated subgroup of $\Aut(\H n)$ and we have $(X, \Delta) \isom \factor{\H n}\Gamma$.
By sequence~\lref{Aut Hn ses}, there is a finite index subgroup $\Gamma_0 \subset \Gamma$ which is contained in $\Aut(\bH)^n$.
Note that $\Aut(\bH) \isom \bP \SL2\R$.
Therefore $\Gamma_0$ is a finitely generated linear group.
By Selberg's lemma~\cite{Alperin87}, there is a torsion-free finite index subgroup $\Gamma_1 \subset \Gamma_0$.
After replacing $\Gamma_1$ with a finite index subgroup again, we may assume that $\Gamma_1 \subset \Gamma$ is a normal subgroup.
Set
\[ Y \defn \factor{\H n}{\Gamma_1} \]
and note that this is a projective manifold.
The universal covering map $\pi \from \H n \to X$ factors as
\[ \H n \xrightarrow{\quad\;\;\quad} Y \xrightarrow{\quad f \quad} X. \]
We need to check the required properties.
\begin{itemize}
\item The map $f$ is finite and Galois with Galois group $\factor\Gamma{\Gamma_1}$.
Also, $f$ is orbi-\'etale with respect to $\Delta$ because $\pi$ has this property and the first map $\H n \to Y$ is \'etale.
\item By the above, we have $K_Y = f^* ( K_X + \Delta )$.
Since $K_X + \Delta$ is ample, so is~$K_Y$.
(Alternatively, one could use the fact that the universal cover of $Y$ is the polydisc.)
\item The tangent bundle of $Y$ splits as a direct sum of line bundles because $\Gamma_1$ acts diagonally on $\H n$.
(See~\cite[proof of Prop.~5.8]{Patel23} for more details.) \qedhere
\end{itemize}
\end{proof}

\subsection*{Minimal models}

We can strengthen \cref{Hn ample} slightly by assuming $K_X + \Delta$ to be only big and nef instead of ample, and showing that the canonical model $(X, \Delta)_\canmod$ is uniformized by the polydisc.

\begin{cor}[Uniformization of minimal models] \label{Hn big nef}
Let $(X, \Delta)$ be as in \cref{std}, but assume only that $K_X + \Delta$ is big and nef.
Furthermore, assume that $\T{(X^\circ, \Delta^\circ)}$ splits as in \cref{Hn ample}.
Then the canonical model $(X, \Delta)_\canmod \ndef (X_\canmod, \Delta_\canmod)$ of the pair $(X, \Delta)$ has the polydisc $\H n$ as its orbifold universal cover.
\end{cor}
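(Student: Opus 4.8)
The plan is to pass to the canonical model $(X_\canmod, \Delta_\canmod)$ and then invoke \cref{Hn ample} there. Since $(X, \Delta)$ is klt and $K_X + \Delta$ is big and nef, the base-point-free theorem shows that $K_X + \Delta$ is semiample, so there is a birational morphism $\mu \from X \to X_\canmod$ with $K_X + \Delta = \mu^*(K_{X_\canmod} + \Delta_\canmod)$, where $\Delta_\canmod \defn \mu_* \Delta$ and $K_{X_\canmod} + \Delta_\canmod$ is ample. From $K_X + \Delta = \mu^*(K_{X_\canmod} + \Delta_\canmod)$ it follows that $(X_\canmod, \Delta_\canmod)$ is again klt, and its boundary has standard coefficients, since these are inherited from $\Delta$ (components of $\Delta$ that are contracted by $\mu$ simply drop out). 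Thus $(X_\canmod, \Delta_\canmod)$ is a pair as in \cref{std}, of dimension $n$, and it suffices to verify the hypothesis of \cref{Hn ample} for it, namely to exhibit a splitting of $\T{(X_\canmod^\circ, \Delta_\canmod^\circ)}$ into $n$ orbifold line bundles.

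Next I would use that $\mu$ is an isomorphism over a big open subset $V \subseteq X_\canmod$: being a proper birational morphism onto the normal variety $X_\canmod$, the map $\mu$ is an isomorphism over $X_\canmod \setminus \mu(\Exc(\mu))$, and $\codim{X_\canmod}{\mu(\Exc(\mu))} \ge 2$. Over $V$, the isomorphism $\mu\inv(V) \bij V$ is one of pairs, identifying $\Delta|_{\mu\inv(V)}$ with $\Delta_\canmod|_V$ (a component of $\Delta$ contracted by $\mu$ has image inside $\mu(\Exc(\mu))$, hence away from $V$). Consequently $V \cap X_\canmod^\circ$ is, via $\mu$, isomorphic to an open subset of the orbifold locus $(X^\circ, \Delta^\circ)$ of $(X, \Delta)$, and transporting the given splitting $\T{(X^\circ, \Delta^\circ)} \isom \bigoplus_{i=1}^n \sL_i$ yields a splitting of $\T{(X_\canmod^\circ, \Delta_\canmod^\circ)}$ over $V \cap X_\canmod^\circ$.

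It then remains to extend this splitting across $X_\canmod^\circ \setminus V$, a closed subset of codimension $\ge 2$ in $X_\canmod^\circ$. On the smooth charts of a smooth orbi-\'etale orbi-structure on $(X_\canmod^\circ, \Delta_\canmod^\circ)$ the orbifold tangent bundle is locally free; the idempotent endomorphisms cutting out the summands over the preimage of $V$ extend uniquely across the codimension-$\ge 2$ locus by normality, and their images are again line bundles (a rank-one locally free sheaf on a smooth variety). These local extensions agree on overlaps because they already do so on the big open set, so they glue to a global splitting $\T{(X_\canmod^\circ, \Delta_\canmod^\circ)} \isom \bigoplus_{i=1}^n \sM_i$ into orbifold line bundles. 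Applying \cref{Hn ample} to $(X_\canmod, \Delta_\canmod)$ then shows that its orbifold universal cover is $\H n$, which is exactly the assertion.

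The main obstacle is the control of the birational contraction $\mu$ — concretely, the statement that $\mu$ is an isomorphism over an open subset of $X_\canmod$ whose complement has codimension at least two — since this is what allows the splitting first to be moved onto that open set and then to be extended by a Hartogs-type argument to all of $X_\canmod^\circ$. The remaining ingredients, namely the descent of klt-ness and of standard coefficients to the canonical model and the extension of direct summands across codimension-two loci, are routine.
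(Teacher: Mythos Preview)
Your proof is correct and follows the same strategy as the paper: verify that $(X_\canmod, \Delta_\canmod)$ satisfies \cref{std}, transport the splitting to $V \cap X_\canmod^\circ$ via the isomorphism $\mu$ over a big open set, extend it across the codimension-$\ge 2$ complement, and apply \cref{Hn ample}. The only cosmetic difference is in the extension step: the paper takes the reflexive hull of each rank-one summand on the smooth orbifold charts $V_\alpha$ and uses that a rank-one reflexive sheaf on a smooth variety is invertible, whereas you extend the idempotent projectors in $\sEnd\big(\T{(X_\canmod^\circ, \Delta_\canmod^\circ)}\big)$ by Hartogs---these are equivalent formulations of the same reflexive-extension argument.
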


\begin{proof}
We only need to show that the canonical model $(X, \Delta)_\canmod$ satisfies the assumptions of \cref{Hn ample}.
Note that $(X_\canmod, \Delta_\canmod)$ is again klt, and $K_{X_\canmod} + \Delta_\canmod$ is ample.

Let $(X_\canmod^\circ, \Delta_\canmod^\circ)$ denote the orbifold locus of $(X, \Delta)_\canmod$, and let $(U, \Delta_\canmod|_U) \subset (X, \Delta)_\canmod$ be the big open subset over which the birational morphism
\[ \pi \from (X, \Delta) \to (X, \Delta)_\canmod \]
is an isomorphism.
Pick a collection of smooth charts $\{ V_\alpha \}_\alpha$ for $(X^\circ_\canmod, \Delta^\circ_\canmod)$.
Then $\{ V'_\alpha \}_\alpha$ is a collection of smooth charts for $\cW \defn (U, \Delta_\canmod|_U) \cap (X^\circ_\canmod, \Delta^\circ_\canmod)$, where $V'_\alpha \subset V_\alpha$ is a big open subset, for each $\alpha$.

We have $\T{V'_\alpha} \isom \bigoplus_{i=1}^n \cL_{\alpha,i}$, where each $\cL_{\alpha,i}$ is a line bundle, because $\pi$ is an isomorphism over $\cW$.
It follows that $\T{V_\alpha} \isom \bigoplus_{i=1}^n \cL'_{\alpha,i}$, where each $\cL'_{\alpha,i}$ is a reflexive sheaf of rank one on $V_\alpha$.
Since each $V_\alpha$ is smooth, $\T{V_\alpha}$ is locally free and each $\cL'_{\alpha,i}$ is in fact a line bundle.
Thus $\T{(X^\circ_\canmod, \Delta^\circ_\canmod)}$ splits as a direct sum of orbifold line bundles as in \cref{Hn ample}.
\end{proof}

\section{Quotients of the classical irreducible BSDs}

In this section, we prove \cref{quot siegel}.
We refer the reader to~\cite[Section~6]{Patel23} for an analogous proof in the setting of klt varieties, which also contains references for the Lie- and representation-theoretic facts used here.
For the other classical irreducible BSDs (Corollaries~\lref{quot D III},~\lref{quot A III} and~\lref{quot BD I}), the proofs are essentially the same as that of \cref{quot siegel} and are thus omitted in order to keep the length of this paper reasonable.
Again, we refer the reader to~\cite[Sections~7--9]{Patel23} for analogous arguments in the non-orbifold setting.

\subsection*{Proof of \cref{quot siegel}}

Suppose $(X, \Delta)$ satisfies conditions~\lref{s1} and~\lref{s2}.
Let $G_0 = \Sp{2n}\R$, $K_0 = \U n$, and $G = \Sp{2n}\C$, $K = \GL n\C$ their complexifications.

Let $P$ be the frame bundle of $\sE$.
Then $P$ is a $K$-principal orbi-bundle on $(X^\circ, \Delta^\circ)$.
Since $G_0$ is a Hodge group of Hermitian type, the Lie algebra $\frg$ of $G$ admits a Hodge decomposition as in~\lref{hd1}.
We have 
\begin{align*}
\frg^{-1,1} \isom \Sym^2 \C^n \isom \C^{n(n+1)/2}
\end{align*}
as $K$-representations.
It follows from~\lref{s1} that there is an isomorphism
\begin{align*}
\theta \from \T{(X^\circ, \Delta^\circ)} \bij P \x_K \frg^{-1,1}
\end{align*}
of orbifold vector bundles.
The pair $(P, \theta)$ is thus a uniformizing orbi-system of Hodge bundles on $(X^\circ, \Delta^\circ)$ for the Hodge group $\Sp{2n}\R$.
It also follows that $P \x_K \frg \isom \Sym^2 ( \sE \oplus \sE \dual )$.
Therefore the Chern class equality~\lref{s2} can be rephrased as
\begin{align*}
\ccorb2{P \x_K \frg} \cdot [ K_X + \Delta ]^{d-2} = 0.
\end{align*}
We conclude from \cref{main} that $(X, \Delta)$ is uniformized by $\cH_n = \Sp{2n}\R / \U n$.

Conversely, suppose the orbifold universal cover of $(X, \Delta)$ is $\cH_n$.
Then we know by \cref{simpson orbifolds} that $(X, \Delta)$ admits a uniformizing orbi-VHS $(P, \theta)$ for the Hodge group $G_0 = \Sp{2n}\R$.
This means that $\T{(X, \Delta)} \isom P \x_K \frg^{-1,1}$, and $\ccorb2{P \x_K \frg} \cdot [ K_X + \Delta ]^{d-2} = 0$, where $K = \GL n\C$ and $\frg = \mathfrak{sp}(2n, \C)$.
Since $\frg^{-1,1} \isom \Sym^2 \C^n$ and $\frg \isom \Sym^2 \big( \C^n \oplus (\C^n) \dual \big)$, it follows that
\begin{align*}
\T{(X, \Delta)} \isom \Sym^2 \sE
\end{align*}
for some vector bundle $\sE$ of rank $n$ on $(X, \Delta)$, and the Chern class equality
\begin{align*}
& \ccorb2{P \x_K \frg} \cdot [ K_X + \Delta ]^{d-2} \; = \\
& \Big[ 2 \ccorb2{X, \Delta} - \cpcorb12{X, \Delta} + 2 n \, \ccorb2\sE - ( n - 1 ) \, \cpcorb12\sE \Big] \cdot [ K_X + \Delta ]^{d-2} = 0
\end{align*}
holds.
This concludes the proof. \qed


\begin{thebibliography}{GKKP11}

\bibitem[Alp87]{Alperin87}
Roger~C. Alperin.
\newblock An elementary account of {S}elberg's lemma.
\newblock {\em Enseign.~Math.~(2)}, 33(3-4):269--273, 1987.

\bibitem[Ati57]{Atiyah57}
M.~F. Atiyah.
\newblock Complex analytic connections in fibre bundles.
\newblock {\em Trans.~Amer. Math.~Soc.}, 85:181--207, 1957.

\bibitem[Aub78]{Aubin78}
Thierry Aubin.
\newblock \'{E}quations du type {M}onge-{A}mp\`ere sur les vari\'et\'es
  k\"ahl\'eriennes compactes.
\newblock {\em Bull.~Sci. Math.~(2)}, 102(1):63--95, 1978.

\bibitem[Bea00]{Beauville00}
Arnaud Beauville.
\newblock Complex manifolds with split tangent bundle.
\newblock In {\em Complex analysis and algebraic geometry}, pages 61--70. de
  Gruyter, Berlin, 2000.

\bibitem[BG08]{BG08}
Charles~P. Boyer and Krzysztof Galicki.
\newblock {\em Sasakian geometry}.
\newblock Oxford Mathematical Monographs. Oxford University Press, Oxford,
  2008.

\bibitem[CGG24]{MYeq}
Beno{\^i}t Claudon, Patrick Graf, and Henri Guenancia.
\newblock {Equality in the Miyaoka--Yau inequality and uniformization of
  non-positively curved klt pairs}.
\newblock {\em C.~R. Math.~Acad. Sci.~Paris}, 362:55--81, 2024.
\newblock Special issue in memory of Jean-Pierre Demailly.

\bibitem[CMP17]{CMP17}
James Carlson, Stefan {M\"{u}ller-Stach}, and Chris Peters.
\newblock {\em Period mappings and period domains}, volume 168 of {\em
  Cambridge Studies in Advanced Mathematics}.
\newblock Cambridge University Press, Cambridge, second edition, 2017.

\bibitem[Dru14]{Dru13}
St{\'e}phane Druel.
\newblock {The Zariski--Lipman conjecture for log canonical spaces}.
\newblock {\em Bull. London Math. Soc.}, 46(4):827--835, 2014.

\bibitem[GK14]{GK13}
Patrick Graf and S{\'a}ndor~J. Kov{\'a}cs.
\newblock {An optimal extension theorem for $1$-forms and the
  {L}ipman--{Z}ariski Conjecture}.
\newblock {\em Documenta Math.}, 19:815--830, 2014.

\bibitem[GKKP11]{GKKP11}
Daniel Greb, Stefan Kebekus, S{\'a}ndor~J. Kov{\'a}cs, and {\relax Th}omas
  Peternell.
\newblock Differential forms on log canonical spaces.
\newblock {\em Publications Math{\'e}matiques de L'IH{\'E}S}, 114:1--83, 2011.

\bibitem[GKP16]{GKP16}
Daniel Greb, Stefan Kebekus, and {\relax Th}omas Peternell.
\newblock {{\'E}tale fundamental groups of Kawamata log terminal spaces, flat
  sheaves, and quotients of Abelian varieties}.
\newblock {\em Duke Math.~J.}, 165(10):1965--2004, 2016.

\bibitem[GKPT20]{GKPT20}
Daniel Greb, Stefan Kebekus, {\relax Th}omas Peternell, and Behrouz Taji.
\newblock Harmonic metrics on {H}iggs sheaves and uniformization of varieties
  of general type.
\newblock {\em Math.~Ann.}, 378(3-4):1061--1094, 2020.

\bibitem[GT22]{GT22}
Henri Guenancia and Behrouz Taji.
\newblock {Orbifold stability and Miyaoka--Yau inequality for minimal pairs}.
\newblock {\em Geom.~Topol.}, 26:1435--1482, 2022.

\bibitem[Hel78]{Helgason78}
Sigurdur Helgason.
\newblock {\em Differential geometry, {L}ie groups, and symmetric spaces},
  volume~80 of {\em Pure and Applied Mathematics}.
\newblock Academic Press, Inc. [Harcourt Brace Jovanovich, Publishers], New
  York--London, 1978.

\bibitem[KN96]{KobayashiNomizu96}
Shoshichi Kobayashi and Katsumi Nomizu.
\newblock {\em Foundations of differential geometry. {V}ol.~{I}}.
\newblock Wiley Classics Library. John Wiley \& Sons, Inc., New York, 1996.

\bibitem[KR24]{KebekusRousseau24}
Stefan Kebekus and Erwan Rousseau.
\newblock {$\mathcal C$-pairs and their morphisms}.
\newblock \href{http://arXiv.org/abs/2407.10668}{arXiv:2407.10668 [math.AG]},
  July 2024.

\bibitem[Kre09]{Kresch09}
Andrew Kresch.
\newblock On the geometry of {D}eligne-{M}umford stacks.
\newblock In {\em Algebraic geometry---{S}eattle 2005. {P}art 1}, volume 80,
  Part 1 of {\em Proc.~Sympos. Pure Math.}, pages 259--271. Amer. Math. Soc.,
  Providence, RI, 2009.

\bibitem[Pat23]{Patel23}
Aryaman Patel.
\newblock {Uniformization of complex projective klt varieties by bounded
  symmetric domains}.
\newblock \href{http://arxiv.org/abs/2301.07591}{arXiv:2301.07591 [math.AG]},
  version~2, January 2023.

\bibitem[PS23]{PS23}
Juan~Mart\'in P\'erez and Florent Schaffhauser.
\newblock {Orbifolds and the Modular Curve}.
\newblock \href{http://arxiv.org/abs/2312.17068}{arXiv:2312.17068 [math.AG]},
  December 2023.

\bibitem[Rud69]{Rudin69}
Walter Rudin.
\newblock {\em Function theory in polydiscs}.
\newblock W.~A.~Benjamin, Inc., New York--Amsterdam, 1969.

\bibitem[Sim88]{Simpson88}
Carlos~T. Simpson.
\newblock Constructing variations of {H}odge structure using {Y}ang--{M}ills
  theory and applications to uniformization.
\newblock {\em J.~Amer. Math.~Soc.}, 1(4):867--918, 1988.

\bibitem[Yau78]{Yau78}
Shing~Tung Yau.
\newblock On the {R}icci curvature of a compact {K}\"ahler manifold and the
  complex {M}onge-{A}mp\`ere equation. {I}.
\newblock {\em Comm.~Pure Appl.~Math.}, 31(3):339--411, 1978.

\end{thebibliography}
\end{document}